\pgfplotsset{compat=1.18}
\tikzset{help lines/.style={step=#1cm,very thin, color=gray},
help lines/.default=.5} 
\tikzset{thick grid/.style={step=#1cm,thick, color=gray},
thick grid/.default=1} 
\numberwithin{figure}{section}
\numberwithin{table}{section}
\newcommand{\lrabs}[1]{\left\lvert #1 \right\lvert}
\newcommand{\lrp}[1]{\left(#1\right)}
\newcommand{\lrb}[1]{\left[#1\right]}
\newcommand{\lrceil}[1]{\left\lceil #1 \right\rceil}
\theoremstyle{plain} 
\newtheorem{theorem}{Theorem}[section]
\newtheorem{lemma}[theorem]{Lemma}
\newtheorem{conjecture}[theorem]{Conjecture}
\newtheorem{proposition}[theorem]{Proposition}
\newtheorem{reformulation}[theorem]{Reformulation}
\theoremstyle{definition} 
\newtheorem{remark}[theorem]{Remark}
\numberwithin{equation}{section}
\newcommand{\ben}{\begin{equation}}
\newcommand{\een}{\end{equation}}
\NewDocumentCommand{\sump}{e{_}}
 {%
  \DOTSB
  \mathop{\IfNoValueTF{#1}{\sump@{}}{\sump@{#1}}}%
  \nolimits
 }
\newcommand{\sump@}[1]{\mathpalette\sump@@{#1}}
\newcommand{\sump@@}[2]{%
  \ifx#1\displaystyle
    {\sump@display{#2}}%
  \else
    \sum@\nolimits'_{#2}%
  \fi
}
\newcommand{\sump@display}[1]{%
  \sbox\z@{$\m@th\displaystyle\sum@\nolimits'$}%
  \sbox\tw@{$\m@th\displaystyle\sum@\limits_{#1}$}%
  \sbox\@tempboxa{$\m@th\displaystyle'$}
  \mathop{\sum@\nolimits' \kern-\wd\@tempboxa}\limits_{#1}%
  \ifdim\wd\z@>\wd\tw@
    \kern\dimexpr\wd\z@-\wd\tw@\relax
  \fi
}
\DeclareMathOperator{\im}{Im}
\DeclareMathOperator{\re}{Re}
\setlist[enumerate]{leftmargin=*,widest=0}
\setlist[itemize]{leftmargin=*,widest=0}
\def\subsection{\@startsection{subsection}{2}%
  \z@{.5\linespacing\@plus.7\linespacing}{.3\linespacing}%
  {\normalfont\bfseries}}
\def\subsubsection{\@startsection{subsubsection}{3}%
  \z@{.5\linespacing\@plus.7\linespacing}{.3\linespacing}%
  {\normalfont\bfseries}}
\title{Zeros of even and odd period polynomials}
\subjclass[2020]{11F11 and 11F67}
\keywords{Even period polynomial; odd period polynomials; zeros of polynomials; sign changes}
\begin{document}

\author[G. Ko]{Grace Ko}
\address[G. Ko]{Department of Mathematics\\
Vanderbilt University\\
Nashville, TN 37240}
\email{grace.s.ko@vanderbilt.edu}

\author[J. Mackenzie]{Jennifer Mackenzie}
\address[J. Mackenzie]{Department of Mathematics\\
Texas A\&M University\\
College Station, TX 77843}
\email{jennifer.mackenzie2@tamu.edu}

\author[E. Ross]{Erick Ross}
\address[E. Ross]{School of Mathematical and Statistical Sciences\\
Clemson University\\
Clemson, SC 29634}
\email{erickjohnross@gmail.com}

\author[H. Xue]{Hui Xue}
\address[H. Xue]{School of Mathematical and Statistical Sciences\\
Clemson University\\
Clemson, SC 29634}
\email{huixue@clemson.edu}

\begin{abstract}
    Let $f \in S_k(\Gamma_0(N))$ be a newform, and let $r_f^{\pm}(X)$ denote its corresponding even and odd period polynomials. For sufficiently large level and weight, we show that the zeros of $r_f^{\pm}(X)$ all lie on the circle $|X|  = \frac{1}{\sqrt N}$.
\end{abstract}

\maketitle

\section{Introduction}
Let $f\in S_k(\Gamma_0(N))$ be a newform of weight $k$ and level $N$. Write the Fourier expansion of $f$ as $f(z)=\sum_{n\ge1} a_f(n) q^n$. Let $L(f,s)=\sum_{n\ge1} a_f(n) n^{-s}$ be its associated $L$-function. Then associated to $f$ are the following three period polynomials:
\begin{align}
r_f(X) &:=-\frac{(k-2)!}{(2\pi i)^{k-1}} \sum_{0 \le n \le k-2} \frac{(2\pi iX)^n}{n!}L(f,k-n-1),\\
r_f^+(X) &:= \frac{i(k-2)!}{(2\pi i)^{k-1}} \sum_{\substack{0 \le n \le k-2 \\ 2 \mid n}}\frac{(2\pi iX)^n}{n!}L(f,k-n-1), \\
r_f^-(X) &:= -\frac{(k-2)!}{(2\pi i)^{k-1}} \sum_{\substack{1 \le n \le k-3 \\ 2 \nmid n}} \frac{(2\pi iX)^n}{n!}L(f,k-n-1).
\end{align}
These are called the (full) period polynomial of $f$, the even period polynomial of $f$, and the odd period polynomial of $f$, respectively. Under the action of the Fricke involution $W_N$, the newform $f$ satisfies 
\begin{align}
    f|_k W_N =\epsilon(f) f,
\end{align}
where $\epsilon(f)=\pm1$ is the called the Fricke sign of $f$. The Fricke sign $\epsilon(f)$ of $f$ is related to the sign $\hat \epsilon(f)$ of the functional equation of $L(f,s)$ via \cite[Section 5.10]{Diamond2005}
\begin{align}i^k \hat\epsilon(f)=\epsilon(f).
\end{align}
It follows from the functional equation of $L(f,s)$ that \cite[p.~2603]{jin-ma-ono-sound}
\begin{align}\label{eq:functional-equation-full}
    r_f(X) = - \epsilon(f)(\sqrt{N}X)^{k-2} r_f\lrp{\frac{-1}{NX}}.
\end{align} 
Taking even and odd parts of this identity then yields an identical functional equation for the even and odd period polynomials:
\begin{align}
 r_f^\pm(X) &= - \epsilon(f)(\sqrt{N}X)^{k-2}  \,r_f^\pm\lrp{\frac{-1}{NX}}.\label{eq:functional-equation-plusminus}
\end{align}

Now, for the full period polynomials $r_f(X)$, Jin-Ma-Ono-Soundararajan \cite{jin-ma-ono-sound} (and El Guindy-Wissam \cite{Guindy2014} in the case of level $N=1$) showed that all the zeros of $r_f(X)$ lie on the circle $|X|  = \frac{1}{\sqrt N}$. Jin-Ma-Ono-Soundararajan called this result the ``Riemann Hypothesis for period polynomials" since $|X| = \frac{1}{\sqrt N}$ is the circle of symmetry for the period polynomial functional equation.
In this paper, we show a similar result for the even and odd period polynomials $r_f^+(X)$ and $r_f^-(X)$.

\begin{theorem} \label{thm:main-thm-even}
    Define $\beta_1^+ := 2$, $\beta_N^+ := 1$ for $2 \le N \le 16$, and $\beta_N^+ = 0$ for $N \ge 17$. Then for newforms $f \in S_k(\Gamma_0(N))$ of weight $k \ge 12$, the even period polynomial $r_f^+(X)$ has at most $4\beta_N^+$ zeros off the circle $|X|=\frac{1}{\sqrt N}$.
\end{theorem}

\begin{theorem} \label{thm:main-thm-odd}
    Define $\beta_N^- := 1$ for $1 \le N \le 3$ and $\beta_N^- = 0$ for $N \ge 4$. Then for newforms $f \in S_k(\Gamma_0(N))$ of weight $k \ge 2\log_2 N$, the odd period polynomial $r_f^-(X)$ has at most $4\beta_N^-+1$ zeros off the circle $|X|=\frac{1}{\sqrt N}$.
\end{theorem}

We give four remarks here about Theorems \ref{thm:main-thm-even} and \ref{thm:main-thm-odd}.

\begin{remark}
    The odd period polynomial $r_f^-(X)$ trivially has a zero at $X=0$. Additionally, any non-zero zeros of $r_f^\pm(X)$ off the circle of symmetry come in quadruples $\pm X, \pm\frac{1}{NX}$ (by the functional equation \eqref{eq:functional-equation-plusminus} and the fact that $r_f^\pm(X)$ is an even/odd function).
    This is why, in Theorems \ref{thm:main-thm-even} and \ref{thm:main-thm-odd}, the number of zeros off the circle of symmetry are stated in the form $4\beta_N^+$ and $4\beta_N^-+1$. 
\end{remark}

\begin{remark} 
Several partial results in the direction of Theorems \ref{thm:main-thm-even} and \ref{thm:main-thm-odd} have been shown before. In the case of level $N=1$, Conrey-Farmer-Imamoglu \cite{Conrey2013} showed that all but $5$ zeros of $r_f^-(X)$ are located on the circle of symmetry. Then extending the method of \cite{Conrey2013}, Choi \cite{choi_zeros_odd} proved that for $N=2$ and $k\ge 96$, all but $5$ zeros of $r_f^-(X)$ are located on the circle of symmetry. Later, Choi \cite{choi_zeros_even} also showed that for sufficiently large $k$ (depending on $N$), all but finitely many of the zeros of $r_f^+(X)$ lie on the circle of symmetry. Very recently, another work appeared during the preparation of this manuscript; for sufficiently large $k$ (depending on $N$), Oh \cite{oh-new-paper} has shown that all but finitely many of the zeros of $r_f^-(X)$ lie on the circle of symmetry.

We note the improvements of Theorems \ref{thm:main-thm-even} and \ref{thm:main-thm-odd} over the previous results.
\begin{enumerate}
    \item
    Theorem \ref{thm:main-thm-even} improves the main result of \cite{choi_zeros_even} to all $k\ge12$. In particular; for any given weight $k \ge 12$, Theorem \ref{thm:main-thm-even} applies to all levels $N$, whereas \cite[Theorem 1.1]{choi_zeros_even} applied to only finitely many levels $N$.
    \item
    Theorem \ref{thm:main-thm-even} improves the values of $\beta_N^+$ from \cite{choi_zeros_even}. For $N \ge 17$ in particular, Theorem \ref{thm:main-thm-even} shows
    that all the zeros of $r_f^+(X)$ lie on the circle of symmetry, whereas \cite[Theorem 1.1]{choi_zeros_even} only showed this for newforms of Fricke sign $\epsilon(f)=+1$.
    \item 
    Theorem \ref{thm:main-thm-odd} improves the main results of \cite{Conrey2013} and \cite{choi_zeros_odd} to general level $N$.
    \item 
    Theorem \ref{thm:main-thm-odd} improves the main result of \cite{choi_zeros_odd} to general weight $k$.
    \item 
    Theorem \ref{thm:main-thm-odd} improves the main result of \cite{oh-new-paper} to the optimal values of $\beta_N^-$.
    \item 
    Theorems \ref{thm:main-thm-even} and \ref{thm:main-thm-odd} do not depend on the Fricke sign of $f$. 
    \item 
    This paper gives a unified approach to deal with both the even and odd period polynomials. All of the previous works in this area have only addressed one of these two cases (e.g. compare \cite{choi_zeros_odd} and \cite{choi_zeros_even}).
    \item 
    The approach used in this paper is much simpler than the previous approaches used in  \cite{Conrey2013,choi_zeros_odd,choi_zeros_even,oh-new-paper}. These works were all based off of the technical lemmas \cite[Lemmas 2.2 and 2.3]{Conrey2013}. In this paper, we use a new approach based off of \cite[Section 2]{InterlacingREU2024}, where we gave a simplified proof of \cite[Theorem 2.8]{Conrey2013}. This new approach follows more closely the strategy used for the full period polynomials $r_f(X)$ in Jin-Ma-Ono-Soundararajan \cite{jin-ma-ono-sound}.
    \item 
    The approach used in this paper gives very accurate estimates on the locations of the period polynomial zeros. This is useful because it paves the way for other results about the period polynomials. For example, in \cite{InterlacingREU2024}, we used this location information to prove an interlacing property for odd period polynomial zeros.
    \item 
    The approach used in this paper has the advantage that it more clearly shows \textit{why} there exist exceptional zeros for small levels. These exceptional zeros come from the argument jumps (corresponding to a winding number around the origin) in Lemmas \ref{lem:argument-even} and \ref{lem:argument-odd}.
\end{enumerate}
\end{remark}

\begin{remark} \label{remark:optimality}
    We make several observations concerning the optimality of Theorems \ref{thm:main-thm-even} and \ref{thm:main-thm-odd}.
    \begin{enumerate}
        \item 
        The lower bound $k \ge 12$ in Theorem \ref{thm:main-thm-even} is optimal. For each $6 \le k \le 10$, there exists a weight $k$ newform (of level $N=17$, for example) having a zero off the circle of symmetry \cite{ross-code}.
        \item 
        The values of $\beta_N^+$ in Theorem \ref{thm:main-thm-even} are optimal, except possibly for $\beta_{16}^+=1$. For each $1 \le N \le 15$, there exists a level $N$ newform (of weight $k=24$, for example) having $4\beta_N^+$ zeros off the circle of symmetry \cite{ross-code}.
        \item
        If Conjecture \ref{conj:N16-endpoint-zero} holds, then the value of $\beta_{16}^+$ in Theorem \ref{thm:main-thm-even} could be updated to $0$, which would then necessarily be optimal.
        \item 
        The lower bound $k \ge 2 \log_2 N$ in Theorem \ref{thm:main-thm-odd} is probably not optimal. Computational evidence seems to suggest that the optimal lower bound is 
        $k \ge 12$ \cite{ross-code}.
        \item 
        The values of $\beta_N^-$ in Theorem \ref{thm:main-thm-odd} are optimal. For each $1 \le N \le 3$, there exists a level $N$ newform (of weight $k=24$, for example) having $4\beta_N^-+1$ zeros off the circle of symmetry \cite{ross-code}.
    \end{enumerate}
\end{remark}

\begin{remark}
    We point out several questions that arise from Theorems \ref{thm:main-thm-even} and \ref{thm:main-thm-odd}. 
    We will not address any of these questions in this paper.
    However, they could potentially make for interesting future projects.
    \begin{enumerate}
        \item
        The most obvious open question is Conjecture \ref{conj:N16-endpoint-zero}. As mentioned in Remark \ref{remark:optimality}, if one could prove this conjecture, then the value of $\beta_{16}^+$ in Theorem \ref{thm:main-thm-even} could be updated to $0$. We have already shown the case of $N=16,\ \epsilon(f)=+1$ (see Case 6 in the proof of Theorem \ref{thm:main-thm-even}). And the case of $N=16,\ \epsilon(f)=-1$ would follow from Conjecture \ref{conj:N16-endpoint-zero} and Lemma \ref{lem:N16-double-zeros-endpoints} (see Case 7 in the proof of Theorem \ref{thm:main-thm-even}).
        \item 
        For $1 \le N \le 15$, one might ask where the $4\beta_N^+$ exceptional zeros from Theorem \ref{thm:main-thm-even} are located.
        In the case of $2 \le N \le 15$; when they exist, the $4\beta_N^+ = 4(1)$ exceptional zeros seem to be located at, or near to $\pm \frac{1}{4},\pm \frac{4}{N}$. 
        In the case of $N=1$; when they exist, the $4\beta_1^+ = 4(2)$ exceptional zeros seem to be located at, or near to $\pm \frac{1}{4},\pm 4$ and $\pm \frac{3}{4},\pm \frac{4}{3}$ \cite{ross-code}.
        It would be an interesting result to prove such a pattern.
        \item
        For $1 \le N \le 3$, one could similarly ask where the
        $4\beta_N^-$ non-zero exceptional zeros from Theorem \ref{thm:main-thm-odd} are located.
        In the case of $N=1,2$; \cite{Conrey2013} and \cite{choi_zeros_odd} showed that $r_f^-(X)$ always have exceptional zeros located at $\pm \frac{1}{2},\pm \frac{2}{N}$.
        In the case of $N=3$; when they exist, the $4\beta_N^- = 4(1)$ non-zero exceptional zeros seem to be located at, or near to $\pm \frac{1}{2},\pm \frac{2}{N}$ \cite{ross-code}.
        \item 
        In Theorems \ref{thm:main-thm-even} and \ref{thm:main-thm-odd}, the best possible lower bound on weight is $k \ge 12$ (see Remark \ref{remark:optimality}). However, it would still be interesting to investigate what happens for smaller weights $k$. Although Theorems \ref{thm:main-thm-even} and \ref{thm:main-thm-odd} would not hold in general, many of the newforms of weight $k < 12$ still happen to have all their zeros on the circle of symmetry. 
    \end{enumerate}
\end{remark}

Finally, we give an outline of the paper.
In Section \ref{sec:endpoint-zeros}, we first take note of certain ``endpoint zeros" that are trivially guaranteed to exist by the functional equation \eqref{eq:functional-equation-plusminus}. Then in Section \ref{sec:reformulation}, we reformulate the question about zeros of period polynomials into a question about zeros of certain real-valued functions.
Then throughout the rest of the paper, our approach will be to lower-bound the number of zeros of these real-valued functions by counting how many times they change sign. 
To achieve this, in Section \ref{sec:approximation-functions} we introduce certain approximation functions for the above real-valued functions. 
Then in Section \ref{sec:error-terms}, we obtain explicit error bounds on these approximations.
Finally, in Sections \ref{sec:proof-main-thm-even} and \ref{sec:proof-main-thm-odd} we complete the proofs of Theorems \ref{thm:main-thm-even} and \ref{thm:main-thm-odd}.

\section{Endpoint zeros} \label{sec:endpoint-zeros}
We begin by noting that in certain cases, $r_f^{+}(X)$ and $r_f^-(X)$ are trivially guaranteed to have ``endpoint zeros" at $X=\pm\frac{1}{\sqrt N}$ (the terminology ``endpoint zeros" will become clear at the end of the next section). In particular, the functional equation \eqref{eq:functional-equation-plusminus} immediately implies the following lemma.
\begin{lemma}\label{lem:zeros-endpoints}
Let $f\in S_k(\Gamma_0(N))$ be a newform.
    \begin{enumerate}
        \item If $\epsilon(f)=+1$, then $r_f^+(X)$ has a zero at $X=\pm\frac{1}{\sqrt{N}}$.
        \item If $\epsilon(f)=-1$, then  $r_f^-(X)$ has a zero at $X=\pm\frac{1}{\sqrt{N}}$.
    \end{enumerate}
\end{lemma}

We let $\delta_{f}^+$ and $\delta_f^-$ denote the number of these trivial endpoint zeros (with the pair $X=\pm \frac{1}{\sqrt N}$ counted only once). In particular, we define
\begin{align}
    \label{eqn:delta-f-def-plus}
    \delta_f^+ := 
    \begin{cases}
        1 & \text{if }\ \ \epsilon(f)=+1, \\
        0 & \text{if }\ \ \epsilon(f)=-1, 
    \end{cases} \\
    \label{eqn:delta-f-def-minus}
    \delta_f^- := 
    \begin{cases}
        1 & \text{if }\ \ \epsilon(f)=-1, \\
        0 & \text{if }\ \ \epsilon(f)=+1.
    \end{cases}
\end{align}

We will not need this fact for the paper, but we also conjecture here that $r_f^+(X)$ in fact has endpoint zeros for \textit{every} newform $f$ of level $N=16$. We have verified the following conjecture computationally for all weights $k \le 100$ \cite{ross-code}.
\begin{conjecture} \label{conj:N16-endpoint-zero}
    Let $f \in S_k(\Gamma_0(16))$ be a newform of level $N=16$. Then $r_f^+\lrp{\pm \frac14} = 0$.
\end{conjecture}
In this conjecture, only the case of $\epsilon(f)=-1$ remains open; Lemma \ref{lem:zeros-endpoints} already implies the case of $\epsilon(f)=+1$.
Also,  when $\epsilon(f)=-1$, this conjecture would furthermore imply that $X=\pm \frac14$ is a double zero of $r_f^+(X)$ (see Lemma \ref{lem:N16-double-zeros-endpoints} below). Hence, if Conjecture \ref{conj:N16-endpoint-zero} holds, then the value of $\delta_f^+$ could be updated to $2$ when $N=16,\ \epsilon(f)=-1$.

\begin{lemma}\label{lem:N16-double-zeros-endpoints}
Let $f\in S_k(\Gamma_0(16))$ be a newform of level $N=16$. If $\epsilon(f)=-1$ and $r^+_f\lrp{\pm \frac14}=0$, then $X=\pm \frac{1}{4}$ is a double zero of $r_f^+(X)$.
\end{lemma}
\begin{proof}
    Suppose that $\epsilon(f)=-1$ and $r^+_f\lrp{\pm \frac14}=0$. In this case, the functional equation \eqref{eq:functional-equation-plusminus} becomes
   \begin{align} 
       r_f^+(X) = (4X)^{k-2} r_f^+\lrp{\frac{1}{16X}}.
   \end{align}
   Then taking the derivative of this identity at $X =\pm \frac{1}{4}$ yields 
   \begin{align} 
       {r_f^+}'\lrp{\pm\frac14} = \pm4(k-2)\, r_f^+\lrp{\pm\frac{1}{4}} -{r_f^+}'\lrp{\pm\frac14} = -{r_f^+}'\lrp{\pm\frac14}.
   \end{align}
   This implies that ${r_f^+}'\lrp{\pm\frac14} = 0$, proving the desired result.
\end{proof}

\section{A reformulation} \label{sec:reformulation}

In this section,  
we use the period polynomial functional equations to rewrite the period polynomials $r_f^{\pm}(X)$ in terms of certain closely related polynomials $q_f^\pm(X)$. We will then reformulate the desired questions about period polynomial zeros into questions about the behavior of these $q_f^\pm(X)$.

Here, and throughout the rest of the paper, we will use the notation $m := \frac{k-2}{2}$. We then normalize the period polynomials as follows (see \cite[p.~917]{choi_zeros_even}, \cite[p.~774]{choi_zeros_odd}):
\begin{align}
    \hat{r}_f^+(X):=&\frac{r_f^+(X)}{(-1)^{m} (2\pi)^{-2m-1}(2m)!}=\sum_{n=0}^{m}\frac{(-1)^n(2\pi X)^{2n}}{(2n)!} L(f, 2m+1-2n), 
    \label{eqn:r-hat-plus} \\
    \hat{r}_f^-(X):=&\frac{-r_f^-(X)}{(-1)^{m}(2\pi)^{-2m-1} (2m)!}=\sum_{n=0}^{m-1}\frac{(-1)^n (2\pi X)^{2n+1}}{(2n+1)!} L(f,2m+1-(2n+1)).
    \label{eqn:r-hat-minus}
\end{align}

Then by \cite[Lemma 3.3]{choi_zeros_even} and \cite[Lemma 3.4]{choi_zeros_odd}, we can write
\begin{align}\label{eqn:rf-qf}
    \hat{r}_f^\pm(X) = q^\pm_f(X) \mp \epsilon(f)\lrp{\sqrt{N}X}^{2m} q^\pm_f\lrp{\frac{1}{NX}},
\end{align}
where $q^+_f(X)$ and $q^-_f(X)$ are the polynomials (with real coefficients) given by 
\begin{align}
    \label{eq:qf_def_even}
    q^+_f(X) &= 
    \sideset{}{'}\sum_{0 \le n \le m/2}  \frac{(-1)^n(2\pi X)^{2n}}{(2n)!}L(f,2m+1-2n), \\
    \label{eq:qf_def_odd}
    q_f^-(X) &= 
    \sideset{}{'}\sum_{0 \le n \le (m-1)/2}\frac{(-1)^n(2\pi X)^{2n+1}}{(2n+1)!}L(f,2m+1-(2n+1)).
\end{align} 
As usual, the $\sum'$ here means that if the last term ($n=m/2$ and $n=(m-1)/2$, respectively) appears in the summation, then it is counted with weight $1/2$.

Next, we parametrize points $X$ lying on the circle of symmetry $|X| = \frac{1}{\sqrt N}$ via $X=\frac{e^{i\theta}}{\sqrt{N}}$ for $\theta \in [0,2\pi)$. Then under this parametrization, 
the identity \eqref{eqn:rf-qf} becomes:
\begin{align}
    \label{eqn:Xcircle-rhat-q-even}
    e^{-i m\theta} \hat{r}_f^+\lrp{ \frac{e^{i \theta}}{\sqrt N}}&=
   \begin{cases} 
        2i \im\lrb{e^{-im\theta}q_f^+\lrp{ \frac{e^{i \theta}}{\sqrt N}}} & 
        \quad\text{if $\epsilon(f)=+1$,} \\
        \phantom{i}2 \re\lrb{e^{-im\theta}q_f^+\lrp{ \frac{e^{i \theta}}{\sqrt N}}} & 
        \quad\text{if $\epsilon(f)=-1$,}
   \end{cases}\\
   \label{eqn:Xcircle-rhat-q-odd}
   e^{-i m\theta} \hat{r}_f^-\lrp{ \frac{e^{i \theta}}{\sqrt N}}&=
   \begin{cases}
      2i \im\lrb{e^{-im\theta}q_f^-\lrp{ \frac{e^{i \theta}}{\sqrt N}}} & 
      \quad\text{if $\epsilon(f)=-1$}, \\
      \phantom{i}2\re\lrb{e^{-im\theta}q_f^-\lrp{ \frac{e^{i \theta}}{\sqrt N}}} & 
      \quad\text{if $\epsilon(f)=+1$}.
   \end{cases}
\end{align}

Finally, we connect all of this setup to the goal of the paper: showing that the period polynomial zeros lie on the circle of symmetry. By \eqref{eqn:Xcircle-rhat-q-even} and \eqref{eqn:Xcircle-rhat-q-odd}, the zeros of $\hat{r}_f^{\pm}(X)$ on the circle of symmetry are precisely the zeros of the imaginary/real part of $e^{-im\theta}q_f^\pm\lrp{ \frac{e^{i \theta}}{\sqrt N}}$ for $\theta \in [0,2\pi)$. 

Thus since $\hat{r}_f^{+}(X)$ is an even polynomial of degree $2m$, it has $m$ pairs of zeros (paired via negation; i.e. rotation by $\pi$ in the complex plane), and so $m-\delta_f^+$ non-endpoint pairs of zeros.
Hence, to show Theorem \ref{thm:main-thm-even} (that $\hat{r}_f^{+}(X)$ has at most $2\beta_N^+$ pairs of zeros off the circle of symmetry), it suffices to show the following reformulation. Note that in this reformulation, the $\delta_f^+$ term is accounting for the endpoint zeros already known to exist at $\theta=0$ (by \eqref{eqn:delta-f-def-plus}).

{
\renewcommand{\thereformulation}{of Theorem \ref{thm:main-thm-even}}
\begin{reformulation}
    For newforms $f \in S_k(\Gamma_0(N))$ of weight $k \ge 12$, the imaginary/real part (depending on whether $\epsilon(f)=+1$ or $\epsilon(f)=-1$) of $e^{-im\theta}q_f^+\lrp{ \frac{e^{i \theta}}{\sqrt N}}$ has at least $m-\delta_f^+ - 2\beta_N^+$ distinct zeros for $\theta \in (0,\pi)$. 
\end{reformulation}
\addtocounter{reformulation}{-1}
}

Similarly, since $\hat{r}_f^{-}(X)$ is an odd polynomial of degree $2m-1$, it has one zero at $0$, plus $m-1$ pairs of zeros. 
Hence, to show Theorem \ref{thm:main-thm-odd} (that $\hat{r}_f^{-}(X)$ has at most $2\beta_N^-$ pairs of non-zero zeros off the circle of symmetry), it suffices to show the following reformulation.

{
\renewcommand{\thereformulation}{of Theorem \ref{thm:main-thm-odd}}
\begin{reformulation}
    For newforms $f \in S_k(\Gamma_0(N))$ of weight $k \ge 2 \log_2 N$, the imaginary/real part (depending on whether $\epsilon(f)=-1$ or $\epsilon(f)=+1$) of $e^{-im\theta}q_f^-\lrp{ \frac{e^{i \theta}}{\sqrt N}}$ has at least $m-1-\delta_f^- - 2\beta_N^-$ distinct zeros for $\theta \in (0,\pi)$.
\end{reformulation}
\addtocounter{reformulation}{-1}
}

Throughout the rest of the paper, we will use this reformulated version of the problem. In particular, we now focus on studying the imaginary/real part of $e^{-im\theta}q_f^\pm\lrp{ \frac{e^{i \theta}}{\sqrt N}}$.

\section{Approximation functions} \label{sec:approximation-functions}

To understand the behavior of $e^{-im\theta}q_f^\pm\lrp{ \frac{e^{i \theta}}{\sqrt N}}$, we will approximate the functions $q_f^+\lrp{ \frac{e^{i \theta}}{\sqrt N}}$ and $q_f^-\lrp{ \frac{e^{i \theta}}{\sqrt N}}$ in Section \ref{sec:error-terms} by:
\begin{align} 
    \label{eqn:approx-qf-even}
    q_f^+\lrp{ \frac{e^{i \theta}}{\sqrt N}} 
    &\approx \cos\lrp{\frac{2\pi e^{i\theta}}{\sqrt{N}}}, \\
    \label{eqn:approx-qf-odd}
    q_f^-\lrp{ \frac{e^{i \theta}}{\sqrt N}} 
    &\approx \sin\lrp{\frac{2\pi e^{i\theta}}{\sqrt{N}}}.
\end{align}

In this section, we collect certain properties of these approximation functions $\cos\lrp{\frac{2\pi e^{i\theta}}{\sqrt{N}}}$ and $\sin\lrp{\frac{2\pi e^{i\theta}}{\sqrt{N}}}$. In particular, Subsection \ref{subsec:argument-function} addresses their arguments, and Subsection \ref{subsec:radius-function} addresses their radii.

\subsection{Arguments of the approximation functions} \label{subsec:argument-function}

Observe that
\begin{align} 
    &\cos\lrp{\frac{2\pi e^{i\theta}}{\sqrt{N}}}\\
    =& \cos\lrp{\frac{2\pi}{\sqrt{N}}\cos\theta+i \frac{2\pi}{\sqrt{N}}\sin\theta}\\
    =&\cos\lrp{\frac{2\pi}{\sqrt{N}}\cos\theta}
    \cosh \lrp{\frac{2\pi}{\sqrt{N}} \sin \theta}
    -i
    \sin\lrp{\frac{2\pi}{\sqrt{N}}\cos\theta}
    \sinh \lrp{\frac{2\pi}{\sqrt{N}} \sin \theta}.
    \label{eqn:coordinate-even-approx}
\end{align}
Hence the tangent of its argument is given by
\begin{align}
    -\frac{
        \sin\lrp{\frac{2\pi}{\sqrt{N}}\cos\theta}
        \sinh \lrp{\frac{2\pi}{\sqrt{N}} \sin \theta}
    }{
        \cos\lrp{\frac{2\pi}{\sqrt{N}}\cos\theta}
        \cosh \lrp{\frac{2\pi}{\sqrt{N}} \sin \theta}
    } =
    -\tan\lrp{\frac{2\pi}{\sqrt{N}}\cos\theta}
    \tanh \lrp{\frac{2\pi}{\sqrt{N}} \sin \theta}. 
\end{align}
Similarly, observe that
\begin{align} 
        &\sin\lrp{2\pi \frac{e^{i\theta}}{\sqrt{N}}}\\ = &\sin\lrp{\frac{2\pi}{\sqrt{N}} \cos\theta + i\frac{2\pi}{\sqrt{N}}\sin\theta}\\
         =&\sin\lrp{\frac{2\pi}{\sqrt{N}}\cos\theta}
         \cosh \lrp{\frac{2\pi}{\sqrt{N}} \sin \theta}
         + i
         \cos\lrp{\frac{2\pi}{\sqrt{N}}\cos\theta}
         \sinh \lrp{\frac{2\pi}{\sqrt{N}} \sin \theta}
         .
         \label{eqn:coordinate-odd-approx}
    \end{align}
Hence the tangent of its argument is given by
\begin{align}
    \frac{
        \cos\lrp{\frac{2\pi}{\sqrt{N}}\cos\theta}
        \sinh \lrp{\frac{2\pi}{\sqrt{N}} \sin \theta}
    }{
        \sin\lrp{\frac{2\pi}{\sqrt{N}}\cos\theta}
        \cosh \lrp{\frac{2\pi}{\sqrt{N}} \sin \theta}
    } =
    \cot\lrp{\frac{2\pi}{\sqrt{N}}\cos\theta}
    \tanh \lrp{\frac{2\pi}{\sqrt{N}} \sin \theta}. 
\end{align}

Now, define  
\begin{align}
    \hat{a}_N^+(\theta) &:= \arctan \lrp{
        -\tan\lrp{\frac{2\pi}{\sqrt{N}}\cos\theta}
        \tanh \lrp{\frac{2\pi}{\sqrt{N}} \sin \theta}
    }, \label{eq:a-hat-N-even}
    \\
    \hat{a}^-_N(\theta) &:=\arctan \lrp{
        \cot\lrp{\frac{2\pi}{\sqrt{N}}\cos\theta}
        \tanh \lrp{\frac{2\pi}{\sqrt{N}} \sin \theta}
    }. \label{eq:a-hat-N-odd}
\end{align}
Recall that, by convention, $\arctan(\cdot)$ is defined to always output values between $-\pi/2$ and $\pi/2$. This convention means that $\hat{a}^{\pm}_N(\theta)$ here will not necessarily be a continuous function of $\theta$. However, one can force it to be continuous by carefully adding on certain multiples of $\pi$. In particular, we now define $a_N^\pm(\theta)$, which is a continuous argument function for the approximation functions $\cos\lrp{ \frac{2\pi e^{i\theta}}{\sqrt{N}}}$ and $\sin\lrp{\frac{e^{2\pi i\theta}}{\sqrt{N}}}$.

The claims of the following two lemmas can easily be seen by basic calculus computations or by a graphing calculator \cite{ross-code}; see Figures \ref{fig:arguments-even} and \ref{fig:arguments-odd} for illustrations for $N=3$. 

\begin{lemma} \label{lem:argument-even}
For $\theta\in[0,\pi]$, the argument $a_N^+(\theta)$ of $\cos\lrp{\frac{2\pi e^{i\theta}}{\sqrt{N}}}$ is continuous and is given by the following.
\begin{enumerate}
    \item If $N\ge17$, then $a_N^+(\theta)=\hat{a}_N^+(\theta)$. Furthermore, $a_N^+(0)=a_N^+(\pi)=0$.
        
    \item If $N=16$, then $a_{16}^+(\theta)=\hat{a}_{16}^+(\theta)$. Furthermore, $a_{16}^+(0)=-\frac{\pi}{2}$ and $a_{16}^+(\pi)=\frac{\pi}{2}$.
        
    \item If $2 \le N\le15$, then 
    \begin{equation}
    a_{N}^+(\theta)=
    \begin{cases}
        \hat{a}_{N}^+(\theta) +\pi &\text{for } 0 \le \theta < \arccos\lrp{{\frac{\sqrt{N}}{4}}},\\
        \hat{a}_{N}^+(\theta) + 2\pi &\text{for } \arccos\lrp{{\frac{\sqrt{N}}{4}}} \le \theta < \arccos\lrp{{\frac{-\sqrt{N}}{4}}},\\
        \hat{a}_{N}^+(\theta) + 3\pi &\text{for } \arccos\lrp{{\frac{-\sqrt{N}}{4}}} \le \theta \le \pi.
    \end{cases}
    \end{equation}
    Furthermore, $a_N^+(0)=\pi$ and $a_N^+(\pi)=3\pi$.
    
    \item If $N=1$, then
    \begin{equation}
    a_{1}^+(\theta)=
    \begin{cases}
        \hat{a}_{1}^+(\theta) &\text{for } 0 \le \theta < \arccos\lrp{{\frac{3}{4}}},\\
        \hat{a}_{1}^+(\theta) + \pi &\text{for } \arccos\lrp{{\frac{3}{4}}} \le \theta < \arccos\lrp{{\frac{1}{4}}},\\
        \hat{a}_{1}^+(\theta) +2\pi &\text{for } \arccos\lrp{{\frac{1}{4}}} \le \theta < \arccos\lrp{{\frac{-1}{4}}},\\
        \hat{a}_{1}^+ (\theta)+ 3\pi &\text{for } \arccos\lrp{{\frac{-1}{4}}} \le \theta < \arccos\lrp{{\frac{-3}{4}}},\\
        \hat{a}_{1}^+(\theta) + 4\pi &\text{for } \arccos\lrp{{\frac{-3}{4}}} \le \theta \le  \pi.
    \end{cases}
    \end{equation}
    Furthermore, $a_1^+(0)=0$ and $a_1^+(\pi)=4\pi$.
\end{enumerate}
\end{lemma}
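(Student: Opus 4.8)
The plan is to extract everything from the explicit decomposition of $\cos\!\left(\frac{2\pi e^{i\theta}}{\sqrt N}\right)$ into real and imaginary parts obtained in \eqref{eq:coordinate-cos-2piX}. Writing $g(\theta)=\frac{2\pi}{\sqrt N}\cos\theta$ and $h(\theta)=\frac{2\pi}{\sqrt N}\sin\theta$, that display gives
\[
\cos\!\left(\tfrac{2\pi e^{i\theta}}{\sqrt N}\right)=U(\theta)+iV(\theta),\qquad U=\cos g\,\cosh h,\quad V=-\sin g\,\sinh h .
\]
First I would note that $\cosh h>0$ always and, on $[0,\pi]$, $\sinh h\ge0$ with strict inequality on $(0,\pi)$; hence on $(0,\pi)$ the signs of $U$ and $V$ are those of $\cos g$ and $-\sin g$, and $U+iV$ never vanishes there (if $V=0$ then $\sin g=0$, so $\lvert\cos g\rvert=1$ and $U\ne0$). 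Therefore a continuous argument $a_N^+$ exists on $(0,\pi)$, is unique up to $2\pi\ZZ$, and — being the argument of a smooth non-vanishing curve — is $C^\infty$; I normalise it by requiring $a_N^+(0)\in(-\pi,\pi]$ (as a one-sided limit in the borderline case $\cos\frac{2\pi}{\sqrt N}=0$). Observe also that $\alpha_N^+(\theta)$ of \eqref{eq:alpha-N-even} is exactly $\arctan\!\big(V(\theta)/U(\theta)\big)$, so away from the zeros of $U$ it agrees with $\arg(U+iV)$ up to an integer multiple of $\pi$.

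The heart of the matter is controlling the jump of $\alpha_N^+$ across a zero of $U$. Solving $\cos g(\theta)=0$ on $(0,\pi)$ means $\cos\theta=\pm\frac{(2j+1)\sqrt N}{4}$ with $j\ge0$ and $\frac{(2j+1)\sqrt N}{4}<1$: there are no solutions for $N\ge16$, the two solutions $\theta=\arccos\!\left(\pm\frac{\sqrt N}{4}\right)$ for $2\le N\le15$, and the four solutions $\theta=\arccos\!\left(\pm\frac14\right),\ \arccos\!\left(\pm\frac34\right)$ for $N=1$. At such a zero $\theta_\ast$ one has $U'(\theta_\ast)=-g'(\theta_\ast)\sin g(\theta_\ast)\cosh h(\theta_\ast)$ with $g'(\theta_\ast)=-\frac{2\pi}{\sqrt N}\sin\theta_\ast<0$ and $\cosh h(\theta_\ast)>0$, so $\operatorname{sign}U'(\theta_\ast)=\operatorname{sign}\!\big(\sin g(\theta_\ast)\big)$, whereas $V(\theta_\ast)=-\sin g(\theta_\ast)\sinh h(\theta_\ast)$ has the opposite sign; hence, as $\theta$ crosses $\theta_\ast$, $V/U$ runs from $+\infty$ to $-\infty$ and $\arctan(V/U)$ jumps by exactly $-\pi$. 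It follows that on $(0,\pi)$
\[
a_N^+(\theta)=\alpha_N^+(\theta)+\Big(c_0+\#\{\text{zeros of }U\text{ in }(0,\theta)\}\Big)\pi ,
\]
where $c_0\in\{0,1\}$ is fixed by the normalisation at $\theta=0$: since $\cos\!\left(\frac{2\pi e^{i0}}{\sqrt N}\right)=\cos\frac{2\pi}{\sqrt N}$ is real and $\alpha_N^+(0)=0$, we get $c_0=0$ when $\cos\frac{2\pi}{\sqrt N}>0$ (i.e.\ $N\ge17$ or $N=1$, so $a_N^+(0)=0$) and $c_0=1$ when $\cos\frac{2\pi}{\sqrt N}<0$ (i.e.\ $2\le N\le15$, where $\frac{2\pi}{\sqrt N}\in(\tfrac\pi2,\tfrac{3\pi}2)$, so $a_N^+(0)=\pi$). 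Substituting the counts and locations of the zeros of $U$ and the value of $c_0$ into this formula reproduces parts (1), (3) and (4) verbatim — the one-sided limits at the breakpoints match because of the $-\pi$ jump just computed — and evaluating at $\theta=\pi$ (where $V(\pi)=0$ again) gives $a_N^+(\pi)=\big(c_0+\#\text{zeros}\big)\pi$, that is $0$, $3\pi$, $4\pi$ respectively.

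For (2), $N=16$, we have $\frac{2\pi}{\sqrt{16}}=\frac\pi2$, so $U>0$ on $(0,\pi)$ (hence $a_{16}^+=\alpha_{16}^+$ there, with no jumps), but $\cos\!\left(\frac{2\pi e^{i\theta}}{\sqrt{16}}\right)$ vanishes at $\theta=0,\pi$, so I would read the endpoint values off as limits. A first-order expansion at $\theta=0$, using $\cos\!\big(\tfrac\pi2\cos\theta\big)=\sin\!\big(\tfrac\pi2(1-\cos\theta)\big)\sim\tfrac\pi4\theta^2$ and $\sinh\!\big(\tfrac\pi2\sin\theta\big)\sim\tfrac\pi2\theta$, gives $U\sim\tfrac\pi4\theta^2$, $V\sim-\tfrac\pi2\theta$, hence $\alpha_{16}^+(\theta)\to-\tfrac\pi2$; by symmetry $\alpha_{16}^+(\theta)\to\tfrac\pi2$ as $\theta\to\pi^-$. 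The same expansion gives $\frac{d}{d\theta}a_{16}^+=\frac{UV'-VU'}{U^2+V^2}\to\tfrac12$ at both endpoints, so $a_{16}^+\in C^1([0,\pi])$; for $N\ne16$ the curve $U+iV$ is non-vanishing on all of $[0,\pi]$, so $a_N^+$ is $C^\infty$ there. The only step that is more than bookkeeping is the sign computation showing that every zero of $U=\re\cos\!\left(\frac{2\pi e^{i\theta}}{\sqrt N}\right)$ forces a $+\pi$ (never $-\pi$) increment in the continuous argument; after that, the lemma is the casework on $N$ governed entirely by where $\cos\!\big(\tfrac{2\pi}{\sqrt N}\cos\theta\big)$ changes sign, exactly as the text before the lemma promises.
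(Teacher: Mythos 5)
Your argument is correct and is essentially the computation the paper intends: the paper gives no written proof of this lemma (it defers to ``basic calculus computations or a graphing calculator'' and the figures), and your decomposition $U=\cos\left(\tfrac{2\pi}{\sqrt N}\cos\theta\right)\cosh\left(\tfrac{2\pi}{\sqrt N}\sin\theta\right)$, $V=-\sin\left(\tfrac{2\pi}{\sqrt N}\cos\theta\right)\sinh\left(\tfrac{2\pi}{\sqrt N}\sin\theta\right)$, the count of zeros of $U$ on $(0,\pi)$ for each range of $N$, and the transversality/sign analysis showing every such crossing forces a $+\pi$ increment of $a_N^+-\alpha_N^+$ is a rigorous write-up of exactly that casework, including the $N=16$ endpoints where the function vanishes and the $C^1$ claim needs the local expansion you give. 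The only loose end is the harmless convention of which constant to attach at the breakpoints themselves, where $\alpha_N^+$ is defined only by one-sided limits — an ambiguity already present in the paper's statement — so nothing needs repair.
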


\begin{figure}[H]
\centering
\begin{tikzpicture}[scale=0.70]
\begin{axis}[
    axis 
    lines=left,
    xmin=0,
    ymin=-2,
    ymax=15,
]
\addplot [
    domain=0+0.001:rad(acos(sqrt(3)/4))-0.001,
    samples=200, 
    color=red,
    thick,
]
{rad(atan(
((-sin(deg(2*pi*cos(deg(x))/sqrt(3))))*(e^(2*pi*sin(deg(x))/sqrt(3)) - e^(-2*pi*sin(deg(x))/sqrt(3))))/((cos(deg(2*pi*cos(deg(x))/sqrt(3))))*(e^(2*pi*sin(deg(x))/sqrt(3)) + e^(-2*pi*sin(deg(x))/sqrt(3))))
))};
\addplot [
    domain=rad(acos(sqrt(3)/4)):rad(acos(-sqrt(3)/4))-0.001, 
    samples=200, 
    color=blue,
    thick,
]
{rad(atan(
((-sin(deg(2*pi*cos(deg(x))/sqrt(3))))*(e^(2*pi*sin(deg(x))/sqrt(3)) - e^(-2*pi*sin(deg(x))/sqrt(3))))/((cos(deg(2*pi*cos(deg(x))/sqrt(3))))*(e^(2*pi*sin(deg(x))/sqrt(3)) + e^(-2*pi*sin(deg(x))/sqrt(3))))
))};
\addplot [
    domain=rad(acos(-sqrt(3)/4))+0.001:pi, 
    samples=200, 
    color=red,
    thick,
]
{rad(atan(
((-sin(deg(2*pi*cos(deg(x))/sqrt(3))))*(e^(2*pi*sin(deg(x))/sqrt(3)) - e^(-2*pi*sin(deg(x))/sqrt(3))))/((cos(deg(2*pi*cos(deg(x))/sqrt(3))))*(e^(2*pi*sin(deg(x))/sqrt(3)) + e^(-2*pi*sin(deg(x))/sqrt(3))))
))};
\end{axis}
\end{tikzpicture}
\qquad
\begin{tikzpicture}[scale=0.70]
\begin{axis}[
    axis 
    lines=left,
    xmin=0,
    ymin=0,
    ymax=15,
]
\addplot [
    domain=0+0.001:rad(acos(sqrt(3)/4))-0.001,
    samples=200, 
    color=red,
    thick,
]
{rad(atan(
((-sin(deg(2*pi*cos(deg(x))/sqrt(3))))*(e^(2*pi*sin(deg(x))/sqrt(3)) - e^(-2*pi*sin(deg(x))/sqrt(3))))/((cos(deg(2*pi*cos(deg(x))/sqrt(3))))*(e^(2*pi*sin(deg(x))/sqrt(3)) + e^(-2*pi*sin(deg(x))/sqrt(3))))
)) + pi};
\addplot [
    domain=rad(acos(sqrt(3)/4)):rad(acos(-sqrt(3)/4))-0.001, 
    samples=200, 
    color=blue,
    thick,
]
{rad(atan(
((-sin(deg(2*pi*cos(deg(x))/sqrt(3))))*(e^(2*pi*sin(deg(x))/sqrt(3)) - e^(-2*pi*sin(deg(x))/sqrt(3))))/((cos(deg(2*pi*cos(deg(x))/sqrt(3))))*(e^(2*pi*sin(deg(x))/sqrt(3)) + e^(-2*pi*sin(deg(x))/sqrt(3))))
)) + 2*pi};
\addplot [
    domain=rad(acos(-sqrt(3)/4))+0.001:pi, 
    samples=200, 
    color=red,
    thick,
]
{rad(atan(
((-sin(deg(2*pi*cos(deg(x))/sqrt(3))))*(e^(2*pi*sin(deg(x))/sqrt(3)) - e^(-2*pi*sin(deg(x))/sqrt(3))))/((cos(deg(2*pi*cos(deg(x))/sqrt(3))))*(e^(2*pi*sin(deg(x))/sqrt(3)) + e^(-2*pi*sin(deg(x))/sqrt(3))))
)) + 3*pi};
\end{axis}
\end{tikzpicture}
    \caption{The left plot is a graph of $\hat{a}_3^+(\theta)$ over the interval $[0,\pi]$. The right plot is a graph of $a_3^+(\theta)$ over the same interval. Note the discontinuities that lend themselves to the definition of the argument function $a_3^+(\theta)$ in Lemma \ref{lem:argument-even}.}
    \label{fig:arguments-even}
\end{figure}

\begin{lemma} \label{lem:argument-odd}
For $\theta\in[0, \pi]$, the argument $a_N^-(\theta)$ of $\sin\lrp{\frac{2\pi e^{i\theta}}{\sqrt{N}}}$ is continuous and is given by the following.
\begin{enumerate}
    \item If $N\ge5$, then
    \begin{equation}\label{eq:alpha-N>=5}
    a^-_N(\theta)=
    \begin{cases} 
        \hat{a}_N^-(\theta)
        &\text{for } 0 \le \theta < \frac{\pi}{2},\\
        \hat{a}_N^-(\theta)+ \pi 
        &\text{for } \frac{\pi}{2} \leq \theta \le \pi.
    \end{cases}
    \end{equation}
    Furthermore, $a_N^-(0)=0$ and $a_N^-(\pi)=\pi$.
    \item If $N=4$, then
    \begin{equation}
    a^-_4(\theta)=
    \begin{cases}  
        \hat{a}_4^-(\theta) 
        &\text{for } 0 \le \theta < \frac{\pi}{2},\\
        \hat{a}_4^-(\theta)+ \pi 
        &\text{for } \frac{\pi}{2} \leq \theta \le \pi.
    \end{cases}
    \end{equation}
    Furthermore, $a_4^-(0)=-\frac{\pi}{2}$ and $a_4^-(\pi)=\frac{3\pi}{2}$. 
    \item If $2 \le N\le 3$, then 
    \begin{equation}
    a_N^-(\theta) = 
    \begin{cases}
        \hat{a}_N^-(\theta) + \pi &\text{for } 0 \le \theta < \arccos\lrp{\frac{\sqrt{N}}{2}},\\
        \hat{a}_N^-(\theta) + 2\pi &\text{for } \arccos\lrp{\frac{\sqrt{N}}{2}} \le \theta < \frac{\pi}{2},\\
        \hat{a}_N^-(\theta) + 3\pi &\text{for } \frac{\pi}{2} \le \theta < \arccos\lrp{\frac{-\sqrt{N}}{2}},\\
        \hat{a}_N^-(\theta) + 4\pi &\text{for } \arccos\lrp{\frac{-\sqrt{N}}{2}} \le \theta \le \pi.
    \end{cases}
    \end{equation}
    Furthermore, $a_{N}^-(0)=\pi$ and $a_N^-(\pi)=4\pi$.
\end{enumerate}
\end{lemma}
We note that Lemma \ref{lem:argument-odd} omits the case of level $N=1$ since the case of odd period polynomials of level one was already dealt with in \cite{Conrey2013}.

\begin{figure}[H]
\centering
\begin{tikzpicture}[scale=0.70]
\begin{axis}[
    axis
    lines=left,
    xmin=0,
    ymin=-2,
    ymax=15,
]
\addplot [
    domain=0+0.001:rad(acos(sqrt(3)/2))-0.001, 
    samples=200, 
    color=red,
    thick,
]
{rad(atan(
((cos(deg(2*pi*cos(deg(x))/sqrt(3))))*(e^(2*pi*sin(deg(x))/sqrt(3)) - e^(-2*pi*sin(deg(x))/sqrt(3))))/((sin(deg(2*pi*cos(deg(x))/sqrt(3))))*(e^(2*pi*sin(deg(x))/sqrt(3)) + e^(-2*pi*sin(deg(x))/sqrt(3))))
))};
\addplot [
    domain=rad(acos(sqrt(3)/2)):pi/2, 
    samples=200, 
    color=blue,
    thick,
]
{rad(atan(
((cos(deg(2*pi*cos(deg(x))/sqrt(3))))*(e^(2*pi*sin(deg(x))/sqrt(3)) - e^(-2*pi*sin(deg(x))/sqrt(3))))/((sin(deg(2*pi*cos(deg(x))/sqrt(3))))*(e^(2*pi*sin(deg(x))/sqrt(3)) + e^(-2*pi*sin(deg(x))/sqrt(3))))
))};
\addplot [
    domain=pi/2 +0.001:rad(acos(-sqrt(3)/2)), 
    samples=200, 
    color=red,
    thick,
]
{rad(atan(
((cos(deg(2*pi*cos(deg(x))/sqrt(3))))*(e^(2*pi*sin(deg(x))/sqrt(3)) - e^(-2*pi*sin(deg(x))/sqrt(3))))/((sin(deg(2*pi*cos(deg(x))/sqrt(3))))*(e^(2*pi*sin(deg(x))/sqrt(3)) + e^(-2*pi*sin(deg(x))/sqrt(3))))
))};
\addplot [
    domain=rad(acos(-sqrt(3)/2))+0.001:pi, 
    samples=200, 
    color=blue,
    thick,
]
{rad(atan(
((cos(deg(2*pi*cos(deg(x))/sqrt(3))))*(e^(2*pi*sin(deg(x))/sqrt(3)) - e^(-2*pi*sin(deg(x))/sqrt(3))))/((sin(deg(2*pi*cos(deg(x))/sqrt(3))))*(e^(2*pi*sin(deg(x))/sqrt(3)) + e^(-2*pi*sin(deg(x))/sqrt(3))))
))};
\end{axis}
\end{tikzpicture}
\qquad
\begin{tikzpicture}[scale=0.70]
\begin{axis}[
    axis
    lines=left,
    xmin=0,
    ymin=0,
    ymax=15,
]
\addplot [
    domain=0+0.001:rad(acos(sqrt(3)/2))-0.001, 
    samples=200, 
    color=red,
    thick,
]
{rad(atan(
((cos(deg(2*pi*cos(deg(x))/sqrt(3))))*(e^(2*pi*sin(deg(x))/sqrt(3)) - e^(-2*pi*sin(deg(x))/sqrt(3))))/((sin(deg(2*pi*cos(deg(x))/sqrt(3))))*(e^(2*pi*sin(deg(x))/sqrt(3)) + e^(-2*pi*sin(deg(x))/sqrt(3))))
)) + pi};
\addplot [
    domain=rad(acos(sqrt(3)/2)):pi/2, 
    samples=200, 
    color=blue,
    thick,
]
{rad(atan(
((cos(deg(2*pi*cos(deg(x))/sqrt(3))))*(e^(2*pi*sin(deg(x))/sqrt(3)) - e^(-2*pi*sin(deg(x))/sqrt(3))))/((sin(deg(2*pi*cos(deg(x))/sqrt(3))))*(e^(2*pi*sin(deg(x))/sqrt(3)) + e^(-2*pi*sin(deg(x))/sqrt(3))))
)) + 2*pi};
\addplot [
    domain=pi/2 +0.001:rad(acos(-sqrt(3)/2)), 
    samples=200, 
    color=red,
    thick,
]
{rad(atan(
((cos(deg(2*pi*cos(deg(x))/sqrt(3))))*(e^(2*pi*sin(deg(x))/sqrt(3)) - e^(-2*pi*sin(deg(x))/sqrt(3))))/((sin(deg(2*pi*cos(deg(x))/sqrt(3))))*(e^(2*pi*sin(deg(x))/sqrt(3)) + e^(-2*pi*sin(deg(x))/sqrt(3))))
)) + 3*pi};
\addplot [
    domain=rad(acos(-sqrt(3)/2))+0.001:pi, 
    samples=200, 
    color=blue,
    thick,
]
{rad(atan(
((cos(deg(2*pi*cos(deg(x))/sqrt(3))))*(e^(2*pi*sin(deg(x))/sqrt(3)) - e^(-2*pi*sin(deg(x))/sqrt(3))))/((sin(deg(2*pi*cos(deg(x))/sqrt(3))))*(e^(2*pi*sin(deg(x))/sqrt(3)) + e^(-2*pi*sin(deg(x))/sqrt(3))))
)) + 4*pi};
\end{axis}
\end{tikzpicture}
    \caption{The left plot is a graph of $\hat{a}_3^-(\theta)$ over the interval $[0,\pi]$. The right plot is a graph of $a_3^-(\theta)$ over the same interval. Note the discontinuities that lend themselves to the definition of the argument function $a_3^-(\theta)$ in Lemma \ref{lem:argument-odd}.}
    \label{fig:arguments-odd}
\end{figure}

\subsection{Radii of the approximation functions} \label{subsec:radius-function}
Next, we consider the radii of the approximation functions $\cos\lrp{\frac{2\pi e^{i\theta}}{\sqrt{N}}}$ and $\sin\lrp{\frac{2\pi e^{i\theta}}{\sqrt{N}}}$. By \eqref{eqn:coordinate-even-approx} and \eqref{eqn:coordinate-odd-approx}, these radii are given by
\begin{align} \label{eq:radius-even}
    r_{N}^+(\theta) &:= \lrabs{\cos\lrp{\frac{2\pi e^{i\theta}}{\sqrt{N}}}} 
    = 
    \sqrt{
        \frac{1}{2} \cosh \lrp{ \frac{4\pi\sin\theta}{\sqrt{N}}}
        + \frac{1}{2} \cos\lrp{\frac{4\pi\cos\theta}{\sqrt{N}}}
    }, \\
    \label{eq:radius-odd}
    r_{N}^-(\theta) &:=
    \lrabs{\sin\lrp{\frac{2\pi e^{i\theta}}{\sqrt{N}}}}
    =
    \sqrt{
        \frac{1}{2} 
        \cosh \lrp{ \frac{4\pi\sin\theta}{\sqrt{N}}}
        - \frac{1}{2} 
        \cos\lrp{\frac{4\pi\cos\theta}{\sqrt{N}}}
    }. 
\end{align}

We then note some basic properties of the radius functions $r^{\pm}_N(\theta)$. The following lemma can be easily verified via direct calculus computation or by graphing calculator \cite{ross-code}.

\begin{lemma} \label{lemma:radius-increasing}
    For all $N \ge 1$, $r_N^\pm(\theta)$ is symmetric about $\frac{\pi}{2}$ over the interval $[0,\pi]$, and is increasing over $[0,\pi/2]$. In particular, for all $\theta \in [0,\pi]$, we have
    $r_N^+(\theta) \ge r_N^+(0) = \lrabs{\cos\lrp{\frac{2\pi}{\sqrt N}}}$ 
    and 
    $r_N^-(\theta) \ge r_N^-(0) = \lrabs{\sin\lrp{\frac{2\pi}{\sqrt N}}}$.
\end{lemma}

\section{Bounds on error terms} \label{sec:error-terms}

In this section, we show that $\cos\lrp{\frac{2\pi e^{i\theta}}{\sqrt{N}}}$ and $\sin\lrp{\frac{2\pi e^{i\theta}}{\sqrt{N}}}$ actually are good approximation functions for $q_f^+\lrp{ \frac{e^{i \theta}}{\sqrt N}}$ and $q_f^-\lrp{ \frac{e^{i \theta}}{\sqrt N}}$ (from \eqref{eqn:approx-qf-even}, \eqref{eqn:approx-qf-odd}).
In particular, we obtain explicit bounds on the differences between $q_f^{\pm}\lrp{ \frac{e^{i \theta}}{\sqrt N}}$ and their approximation functions. Many of these arguments are modifications of Choi's work in \cite{choi_zeros_odd,choi_zeros_even}.

To accomplish this, we will first need the following $L$-function value estimates.
\begin{lemma}\label{lem:bounds-L-values}
    Let $k\ge12$ and  $f\in S_k(\Gamma_0(N))$ be a newform. 
    \begin{itemize}
        \item[(1)]  For any integer $\sigma\ge\frac{3k}{4}$, we have 
        \begin{align}
            |L(f,\sigma)-1| \le 2.71 \cdot2^{-k/4}
        \end{align}
        \item[(2)] For any integer $\sigma\ge \frac{k}{2}$, we have
        \begin{align}
            |L(f,\sigma)| 
            &\le 4\sqrt{k}(1+\log k)+\frac{4}{e^{\frac{\pi}{\sqrt{N}}}-1} e^{\frac{-\pi k}{\sqrt{N}}} 2^{k/2}.
        \end{align}
    \end{itemize}
\end{lemma}
\begin{proof}
    (2) was already shown in \cite[Lemma 2.1]{choi_zeros_even}. Here, we give a proof of (1).
    
    If $\sigma\ge \frac{3k}{4}$ and $d(n)$ is the divisor function, then by Deligne's theorem  
    \begin{align}
        |L(f,\sigma)-1| \le \sum_{n=2}^{\infty}\frac{d(n)}{n^{\sigma-(k-1)/2}} \le \sum_{n=2}^{\infty} \frac{d(n)}{n^{k/4+1/2}} 
        =\zeta\lrp{\frac{k+2}{4}}^2-1.
    \end{align}
    Furthermore,
    \begin{align}
        \zeta\lrp{\frac{k+2}{4}}^2-1 &=\lrp{\zeta\lrp{\frac{k+2}{4}}+1}\lrp{\zeta\lrp{\frac{k+2}{4}}-1} \\
        &\le \lrp{\zeta\lrp{\frac{14}{4}}+1}\lrp{\zeta\lrp{\frac{k+2}{4}}-1},
    \end{align}
    where
    \begin{align}
        \zeta\lrp{\frac{k+2}{4}}-1
        &= 2^{-\frac{k+2}{4}}+\sum_{n\ge3} n^{-\frac{k+2}{4}} \\
        &\le 2^{-\frac{k+2}{4}}+\int_{2}^{\infty} u^{-\frac{k+2}{4}}\,du \\
        &= 2^{-\frac{k+2}{4}}+\frac{1}{\frac{k-2}{4}} 2^{-\frac{k-2}{4}}\\
        &\le \sqrt2 \lrp{\frac12 + \frac{4}{10}} 2^{-\frac{k}{4}}. 
    \end{align}
    This verifies (1), since $ \lrp{\zeta\lrp{\frac{14}{4}}+1} \sqrt2 \lrp{\frac12 + \frac{4}{10}} \le 2.71$.
\end{proof}


\subsection{Even period polynomials} \label{subsec:bounds-even}


Denote the difference between $q_f^+\lrp{ \frac{e^{i \theta}}{\sqrt N}}$ and its approximation function $\cos\lrp{\frac{2\pi e^{i\theta}}{\sqrt{N}}}$ by
\begin{align} \label{eq:Ef-def-even}
    E_f^+(\theta) 
    &:= q_f^+\lrp{ \frac{e^{i \theta}}{\sqrt N}} -\cos\lrp{\frac{2\pi e^{i\theta}}{\sqrt{N}}}
    =S_1^+(\theta) + S_2^+(\theta) - S_3^+(\theta),
\end{align}
where 
\begin{align}
    S_1^+(\theta) &:= \sum_{j < m/4}\frac{(-1)^j
    \lrp{ \frac{2 \pi e^{i \theta}}{\sqrt N}}^{2j}}{(2j)!}(L(f,2m+1-2j)-1),\\
    S_2^+(\theta) &:= 
    \sump_{m/4 \le j\le\frac{m}{2}}\frac{
        (-1)^j \lrp{ \frac{2 \pi  e^{i \theta}}{\sqrt N}}^{2j}
    }{(2j)!}
    L(f,2m+1-2j),\\
    S_3^+(\theta) &:= \sum_{j \ge m/4}\frac{(-1)^j \lrp{ \frac{2 \pi  e^{i \theta}}{\sqrt N}}^{2j}}{(2j)!}.
\end{align}

By Lemma \ref{lem:bounds-L-values}, this then yields the bounds
\begin{align}
    |S_1^+(\theta)| &\le  
    2.71 \cdot2^{-k/4}
    \sum_{j<m/4} \frac{\lrp{\frac{2\pi}{\sqrt N}}^{2j}}{(2j)!}, 
    \label{eqn:temp-S1-plus-bound} \\
    |S_2^+(\theta)| &\le  \lrp{4\sqrt{k}(1+\log k)+\frac{4}{e^{\frac{\pi}{\sqrt{N}}}-1} e^{\frac{-\pi k}{\sqrt{N}}} 2^{k/2}}  \sump_{m/4 \le j \le m/2} \frac{\lrp{\frac{2\pi}{\sqrt N}}^{2j}}{(2j)!}, 
    \label{eqn:temp-S2-plus-bound} \\
    |S_3^+(\theta)| &\le  \sum_{j \ge m/4} \frac{\lrp{\frac{2\pi}{\sqrt N}}^{2j}}{(2j)!}.
    \label{eqn:temp-S3-plus-bound}
\end{align}
After some inspection, one can see that each of these bounds tends to $0$ exponentially quickly in $k$ (for specific bounding details, see \eqref{eqn:SNinfk-exponentially-small} in the proof of Proposition \ref{prop:error<radius-odd} below). In particular, these bounds allow us to prove the following proposition.

\begin{proposition} \label{prop:error<radius-even}
    Let $f \in S_k(\Gamma_0(N))$ be a newform of weight $k \ge k_N^+$ (as given below).
        \begin{center}
        \begin{tabular}{|c||c|c|c|c|c|c|c|c|}
            \hline
            $N$      & 1 & 2 & 3 & 4-5 & 6-18 & 19-29 & 30-103 & $\ge$104 \\
            \hline
            $k_N^+$ & 76 & 60 & 52 & 44 & 36 & 28 & 20 & 12 \\
            \hline
        \end{tabular}
    \end{center}
    
    \begin{enumerate}
        \item 
        If $N \ne 16$, then  $|E_f^+(\theta)| < r_N^+(\theta)$ for all $\theta \in [0, \pi]$. 
        \item 
        If $N=16$, then $|E_f^+(\theta)| < \frac{1}{2m}$ for all $\theta \in [0,\pi]$.
    \end{enumerate}
\end{proposition}
\begin{proof}
    Define $S_{N,M,k}^+$ as
    \begin{align}
        S_{N,M,k}^+ := \,& 
        2.71 \cdot2^{-k/4}
        \sum_{j<m/4} \frac{\lrp{\frac{2\pi}{\sqrt N}}^{2j}}{(2j)!}
        +
        \sum_{j \ge m/4} \frac{\lrp{\frac{2\pi}{\sqrt N}}^{2j}}{(2j)!} \\
        \,&+
        \lrp{4\sqrt{k}(1+\log k)+\frac{4}{e^{\frac{\pi}{\sqrt{N}}}-1} e^{\frac{-\pi k}{\sqrt{M}}} 2^{k/2}}  \sump_{m/4 \le j \le m/2} \frac{\lrp{\frac{2\pi}{\sqrt N}}^{2j}}{(2j)!}.
    \end{align}
    Observe that $S_{N,M,k}^+$ is increasing in $M$, and decreasing in $N$ (using the fact that $\frac{4}{e^x - 1} \cdot (2x)^4$ is increasing over $x\in[0, \pi]$). 
    
    Now, for each fixed pair 
    \begin{align}
        (N_0,M_0) \in \{
        &(1,1), (2,2), (3,3), (4,4), (5,5), (6,6), (7,7), (8,8), 
        (9,9), (10,10), (11,11), (12,12), \\
        & (13,13), (14,14), (15,15), 
        (17,18), (19,29),(30,39),(40,103), (104,104), (105,109),\\
        &(110,119), (120,169), (170,249), (250,399), (400,699), (700,1599), (1600,\infty)\},
    \end{align}
    consider the levels $N$ in the range $N_0 \le N \le M_0$.  
    For such $N$, we have the upper bound $\lrabs{E_f^+(\theta)} \le S_{N,N,k}^+ \le S_{N_0,M_0,k}^+$ by \eqref{eqn:temp-S1-plus-bound}, \eqref{eqn:temp-S2-plus-bound}, \eqref{eqn:temp-S3-plus-bound}.
    Similarly, we have the lower bound $r_N^+(\theta) \ge \lrabs{\cos\lrp{\frac{2\pi}{\sqrt N}}} \ge \lrabs{\cos\lrp{\frac{2\pi}{\sqrt N_0}}}$ by Lemma \ref{lemma:radius-increasing} (and using the fact that $N \mapsto \lrabs{\cos\lrp{\frac{2\pi}{\sqrt N}}}$ is increasing for $N \ge 16$).
    
    Hence to show (1), it suffices to show
    that $S_{N_0,M_0,k}^+ < \lrabs{\cos\lrp{\frac{2\pi}{\sqrt N_0}}}$ for all $k \ge k_{N_0}^+$. 
    This fact can be easily verified for each pair $(N_0,M_0)$ via direct calculus computation or by graphing calculator \cite{ross-code}. 

    Similarly, to show (2), it suffices to show that $m \cdot S_{16,16,k}^+ < \frac12$ for all $k \ge k_{16}^+ = 36$. Again, this fact can be easily verified via direct calculus computation or by graphing calculator \cite{ross-code}. 
\end{proof}


\subsection{Odd period polynomials} \label{subsec:bounds-odd}

Denote the difference between $q_f^-\lrp{ \frac{e^{i \theta}}{\sqrt N}}$ and its approximation function $\sin\lrp{\frac{2\pi e^{i\theta}}{\sqrt{N}}}$ by
\begin{align} \label{eq:Ef-def-odd}
    E_f^-(\theta) 
    &:= q_f^-\lrp{ \frac{e^{i \theta}}{\sqrt N}} -\sin\lrp{\frac{2\pi e^{i\theta}}{\sqrt{N}}}
    =S_1^-(\theta) + S_2^-(\theta) - S_3^-(\theta),
\end{align}
where 
\begin{align}
    S_1^-(\theta) &:= \sum_{j < (m-2)/4}\frac{(-1)^j
    \lrp{ \frac{2 \pi e^{i \theta}}{\sqrt N}}^{2j+1}}{(2j+1)!}(L(f,w-2j)-1),\\
    S_2^-(\theta) &:= 
    \sump_{(m-2)/4 \le j\le (m-1)/2}\frac{
        (-1)^j \lrp{ \frac{2 \pi  e^{i \theta}}{\sqrt N}}^{2j+1}
    }{(2j+1)!}
    L(f,w-2j),\\
    S_3^-(\theta) &:= \sum_{j \ge (m-2)/4}\frac{(-1)^j \lrp{ \frac{2 \pi  e^{i \theta}}{\sqrt N}}^{2j+1}}{(2j+1)!}.
\end{align}

By Lemma \ref{lem:bounds-L-values}, we then obtain the bounds
\begin{align}
    |S_1^-(\theta)| &\le  
    2.71 \cdot2^{-k/4}
    \sum_{j<(m-2)/4} \frac{\lrp{\frac{2\pi}{\sqrt N}}^{2j+1}}{(2j+1)!}, 
    \label{eqn:temp-S1-minus-bound} \\
    |S_2^-(\theta)| &\le  \lrp{4\sqrt{k}(1+\log k)+\frac{4}{e^{\frac{\pi}{\sqrt{N}}}-1} e^{\frac{-\pi k}{\sqrt{N}}} 2^{k/2}}  \sump_{(m-2)/4 \le j \le (m-1)/2} \frac{\lrp{\frac{2\pi}{\sqrt N}}^{2j+1}}{(2j+1)!}, 
    \label{eqn:temp-S2-minus-bound} \\
    |S_3^-(\theta)| &\le  \sum_{j \ge (m-2)/4} \frac{\lrp{\frac{2\pi}{\sqrt N}}^{2j+1}}{(2j+1)!}.
    \label{eqn:temp-S3-minus-bound}
\end{align}

These bounds then allow us to prove the following proposition.
\begin{proposition} \label{prop:error<radius-odd}
    Let $f \in S_k(\Gamma_0(N))$ be a newform of level $N \ge 2$ and weight $k \ge k_N^-$ (as given below).
    \begin{center}
        \begin{tabular}{|c|c|c|c|c|c|c|}
            \hline
            $N$     & 2-4 & 5-7 & 8-13 & 14-39 & 40-2499 & $\ge$2500 \\
            \hline
            $k_N^-$ & 56 & 40 & 32 & 24 & 16 &  $2 \log_2 N$ \\
            \hline
        \end{tabular}
    \end{center}
    \begin{enumerate}
        \item 
        If $N \ne 4$, then $|E_f^-(\theta)| < r_N^-(\theta)$ 
        for all $\theta \in [0, \pi]$.
        \item 
        If $N = 4$, then $|E_f^-(\theta)| < \frac{1}{2m}$ 
        for all $\theta \in [0,\pi]$.
    \end{enumerate} 
\end{proposition}
\begin{proof} 

Define $S_{N,M,k}^-$ as
    \begin{align}
        S_{N,M,k}^- := \,& 
        2.71 \cdot2^{-k/4}
        \sum_{j<(m-2)/4} \frac{\lrp{\frac{2\pi}{\sqrt N}}^{2j+1}}{(2j+1)!}
        +
        \sum_{j \ge (m-2)/4} \frac{\lrp{\frac{2\pi}{\sqrt N}}^{2j+1}}{(2j+1)!} 
        \label{eqn:S-N-M-k-minus}
        \\
        \,&+
        \lrp{4\sqrt{k}(1+\log k)+\frac{4}{e^{\frac{\pi}{\sqrt{N}}}-1} e^{\frac{-\pi k}{\sqrt{M}}} 2^{k/2}}  \sump_{(m-2)/4 \le j \le (m-1)/2} \frac{\lrp{\frac{2\pi}{\sqrt N}}^{2j+1}}{(2j+1)!}.
    \end{align}
    Just like before, $S_{N,M,k}^-$ is increasing in $M$, and decreasing in $N$.
    
    Now, for each fixed pair 
    \begin{align}
        (N_0,M_0) \in \{
        &(2,2), (3,3), (5,5), (6,6), (7,7), (8,8), 
        (9,9), (10,10), (11,11), (12,12), (13,13), \\
        &(14,14), (15,15), 
        (16,39), (40,49), (50,109), (110,279), (280,699), (700,2499)\},
    \end{align}
    consider the levels $N$ in the range $N_0 \le N \le M_0$.  
    For such $N$, we have the upper bound $\lrabs{E_f^-(\theta)} \le S_{N,N,k}^- \le S_{N_0,M_0,k}^-$ by \eqref{eqn:temp-S1-minus-bound}, \eqref{eqn:temp-S2-minus-bound}, \eqref{eqn:temp-S3-minus-bound}.
    Similarly, we have the lower bound $r_N^-(\theta) \ge \lrabs{\sin\lrp{\frac{2\pi}{\sqrt N}}} \ge \lrabs{\sin\lrp{\frac{2\pi}{\sqrt M_0}}}$ by Lemma \ref{lemma:radius-increasing} (and using the fact that $N \mapsto \lrabs{\sin\lrp{\frac{2\pi}{\sqrt N}}}$ is decreasing for $N \ge 16$).
    
    Hence to show (1) for $N \le 2499$, it suffices to show
    that $S_{N_0,M_0,k}^- < \lrabs{\sin\lrp{\frac{2\pi}{\sqrt M_0}}}$ for all $k \ge k_{N_0}^-$. 
    This fact can be easily verified for each pair $(N_0,M_0)$ via direct calculus computation or by graphing calculator \cite{ross-code}. 

    Similarly, to show (2), it suffices to show that $m \cdot S_{4,4,k}^- < \frac12$ for all $k \ge k_{4}^- = 56$. Again, this fact can be easily verified via direct calculus computation or by graphing calculator \cite{ross-code}.

    Finally, we show (1) for $N \ge 2500$. In this case, it suffices to show that $S_{N,\infty,k}^- < \lrabs{\sin\lrp{\frac{2\pi}{\sqrt N}}}$ for all $k \ge k_{N}^-$. We prove this inequality via more detailed estimates of $S_{N,\infty,k}^-$.

    Observe that each of the summations in \eqref{eqn:S-N-M-k-minus} is made up of terms for the exponential power series. And by the Taylor remainder theorem for $\exp(x)$, we have the bound
    \begin{align}
        \sum_{j \ge n} \frac{x^j}{j!} \le \frac{\exp(x) x^n}{n!} \qquad \text{for $x \ge 0$}.
    \end{align}
    This means that
    \begin{align}
        \sum_{j<(m-2)/4} \frac{\lrp{\frac{2\pi}{\sqrt N}}^{2j+1}}{(2j+1)!} 
        &\le \exp\lrp{\frac{2\pi}{\sqrt N}}
        \label{eqn:temp4}
        \\
        \text{and} \quad\sum_{j \ge (m-2)/4} \frac{\lrp{\frac{2\pi}{\sqrt N}}^{2j+1}}{(2j+1)!} 
        &\le 
        \frac{\exp \lrp{\frac{2\pi}{\sqrt N}} \, \lrp{\frac{2\pi}{\sqrt N}}^{\lrceil{m/2}}}{\lrceil{m/2} !}.
        \label{eqn:temp5}
    \end{align}
    Additionally, observe that 
    \begin{align} \label{eqn:temp6}
        4\sqrt{k}(1+\log k)+\frac{4}{e^{\frac{\pi}{\sqrt{N}}}-1} 2^{k/2}
        \le \lrp{\frac{4}{\pi} \sqrt N + 1}
        2^{k/2}.
    \end{align}
    This follows immediately from the facts that $4\sqrt{k}(1+\log k) \le 2^{k/2}$ (since $k \ge 24$)
    and $\frac{1}{e^x-1} \le \frac1x$ (taken at $x = \frac{\pi}{\sqrt N}$). 

    Then applying \eqref{eqn:temp4}, \eqref{eqn:temp5}, \eqref{eqn:temp6} to \eqref{eqn:S-N-M-k-minus}, we obtain
    \begin{align}
        S_{N,\infty,k}^- 
        &\le  
             2.71 \cdot 2^{-k/4} 
             \exp\lrp{\frac{2\pi}{\sqrt N}}
             + 
             \frac{\exp \lrp{\frac{2\pi}{\sqrt N}} \, \lrp{\frac{2\pi}{\sqrt N}}^{\lrceil{m/2}}}{\lrceil{m/2} !}
             \\
             & \qquad+
             \lrp{\frac{4}{\pi} \sqrt N + 1} 2^{k/2}
             \frac{\exp \lrp{\frac{2\pi}{\sqrt N}} \, \lrp{\frac{2\pi}{\sqrt N}}^{\lrceil{m/2}}}{\lrceil{m/2} !} 
         \\
        &= 2^{-k/4} \exp\lrp{\frac{2\pi}{\sqrt N}} \lrb{
             2.71 
             + 
            \lrp{\frac{4}{\pi} \sqrt N + 2} 2^{3k/4}
             \frac{ \lrp{\frac{2\pi}{\sqrt N}}^{\lrceil{m/2}}}{\lrceil{m/2} !} 
        } \\
        &= 2^{-k/4} \exp\lrp{\frac{2\pi}{\sqrt N}} \lrb{
             2.71
             + 
            \lrp{8 + \frac{4\pi}{\sqrt N}} 2^{3k/4}
             \frac{ \lrp{\frac{2\pi}{\sqrt N}}^{\lrceil{m/2}-1}}{\lrceil{m/2} !} 
        }  \\
        &\le 2^{-k/4} \exp\lrp{\frac{2\pi}{50}} \lrb{
             2.71
             + 
            \lrp{8 + \frac{4\pi}{50}} 2^{3k/4}
             \frac{ \lrp{\frac{2\pi}{50}}^{\lrceil{m/2}-1}}{\lrceil{m/2} !} 
        }. \qquad \text{(since $N \ge 2500$)}
    \end{align}
    Now, it is straightforward to verify that $2^{3k/4}  \lrp{\frac{2\pi}{50}}^{\lrceil{m/2}-1} / \lrceil{m/2} ! \le 0.04$ (since $k \ge 24$).  
    This then means that
    \begin{align}
        S_{N,\infty,k}^- 
        &\le 2^{-k/4}\exp\lrp{\frac{2\pi}{50}} \lrb{
             2.71  
             + 
            \lrp{8+\frac{4\pi}{50}}\cdot 0.04  
        } \le 3.45 \cdot 2^{-k/4} \label{eqn:SNinfk-exponentially-small}
        \\
        &\le 3.45 \cdot 2^{- (2\log_2 N)/4} = \frac{3.45}{\sqrt N} 
        < \sin\lrp{\frac{2\pi}{\sqrt N}}, 
    \end{align}
    proving the desired result.
\end{proof}

\section{Proof of Theorem \ref{thm:main-thm-even}} 
\label{sec:proof-main-thm-even}

In this section, we prove Theorem \ref{thm:main-thm-even}. Recall (by the reformulation of Section \ref{sec:reformulation}) that it suffices to show that the imaginary/real part of $e^{-im\theta}q_f^+\lrp{ \frac{e^{i \theta}}{\sqrt N}}$ has at least $m-\delta_f^+ - 2\beta_N^+$ distinct zeros over $\theta \in (0,\pi)$. 
\begin{proof} 
    First, observe that there are only finitely many pairs $(N,k)$ with $12 \le k < k_N^+$ (see Proposition \ref{prop:error<radius-even}). Hence to prove the desired result for $k < k_N^+$, we just checked these finitely many cases by computer \cite{ross-code}. For rest of the proof, we will then assume that $k \ge k_N^+$, so that we can apply Proposition \ref{prop:error<radius-even}.
    
    \textbf{Case 1: $N \ge 17,\ \epsilon(f)=+1$.\\}
    In this case, we need to show that $\im\lrb{e^{-im\theta} q_f^+\lrp{\frac{e^{i\theta}}{\sqrt N}}}$ has at least $m-1$ distinct zeros over $\theta \in (0,\pi)$. Writing $q_f^+\lrp{\frac{e^{i\theta}}{\sqrt N}}$ in terms of its approximation function (see \ref{eq:Ef-def-even}), we obtain
    \begin{align}
        \im\lrb{e^{-im\theta} q_f^+\lrp{\frac{e^{i\theta}}{\sqrt N}}}
        &= \im\lrb{e^{-im\theta}  \cos\lrp{\frac{2\pi e^{i\theta}}{\sqrt N}} 
        + e^{-im\theta} E_f^+(\theta)
        } \\
        &= \sin \lrp{a_N^+(\theta)-m \theta} r_N^+(\theta) + \im\lrb{e^{-im\theta} E_f^+(\theta)}. \label{eqn:temp.2.1}
    \end{align}
    (Note that $a_N^+(\theta)-m \theta$ here is the argument of $e^{-im\theta}  \cos\lrp{\frac{2\pi e^{i\theta}}{\sqrt N}}$ and $r_N^+(\theta)$ is its radius.) Then recall that $\lrabs{E_f^+(\theta)} < r_N^+(\theta)$ (by Proposition \ref{prop:error<radius-even}). Hence \eqref{eqn:temp.2.1} means that 
    $\im\lrb{e^{-im\theta} q_f^+\lrp{\frac{e^{i\theta}}{\sqrt N}}}$ will have the same sign as $\sin\lrp{a_N^+(\theta)-m \theta}$
    whenever $\sin\lrp{a_N^+(\theta)-m \theta} = \pm 1$. 

    Now, $a_N^+(\theta)-m \theta$ changes continuously from $0$ to $-m\pi$ over $\theta \in [0,\pi]$ (by Lemma \ref{lem:argument-even}), and so $\sin\lrp{a_N^+(\theta)-m \theta} = \pm 1$ (alternatingly) $m$ times over the same interval. Hence $\im\lrb{e^{-im\theta} q_f^+\lrp{\frac{e^{i\theta}}{\sqrt N}}}$ has $m-1$ sign changes over $\theta \in (0,\pi)$, yielding $m-1$ distinct zeros in $(0,\pi)$, as desired.

    \textbf{Case 2: $N \ge 17,\ \epsilon(f)=-1$.\\}
    In this case, we need to show that $\re\lrb{e^{-im\theta} q_f^+\lrp{\frac{e^{i\theta}}{\sqrt N}}}$ has $m$ at least distinct zeros over $\theta \in (0,\pi)$. Just like before, we have
    \begin{align}
        \re\lrb{e^{-im\theta} q_f^+\lrp{\frac{e^{i\theta}}{\sqrt N}}}
        &= \re\lrb{e^{-im\theta}  \cos\lrp{\frac{2\pi e^{i\theta}}{\sqrt N}} 
        + e^{-im\theta} E_f^+(\theta)
        } \\
        &= \cos \lrp{a_N^+(\theta)-m \theta} r_N^+(\theta) + \re\lrb{e^{-im\theta} E_f^+(\theta)}. \label{eqn:temp.2.2}
    \end{align}
    Hence 
    $\re\lrb{e^{-im\theta} q_f^+\lrp{\frac{e^{i\theta}}{\sqrt N}}}$ will have the same sign as $\cos\lrp{a_N^+(\theta)-m \theta}$
    whenever $\cos\lrp{a_N^+(\theta)-m \theta} = \pm 1$. 

    Now, $a_N^+(\theta)-m \theta$ changes continuously from $0$ to $-m\pi$ over $\theta \in [0,\pi]$ (by Lemma \ref{lem:argument-even}), and so $\cos\lrp{a_N^+(\theta)-m \theta} = \pm 1$ (alternatingly) $m+1$ times over the same interval. Hence $\re\lrb{e^{-im\theta} q_f^+\lrp{\frac{e^{i\theta}}{\sqrt N}}}$ has $m$ sign changes over $\theta \in (0,\pi)$, yielding $m$ distinct zeros in $(0,\pi)$, as desired.
        
    \textbf{Case 3: $2 \le N \le 15,\ \epsilon(f)=+1$.\\}
    In this case, we need to show that $\im\lrb{e^{-im\theta} q_f^+\lrp{\frac{e^{i\theta}}{\sqrt N}}}$ has at least $m-3$ distinct zeros over $\theta \in (0,\pi)$.

    Here, $a_N^+(\theta)-m \theta$ changes continuously from $\pi$ to $(-m+3)\pi$ over $\theta \in [0,\pi]$ (by Lemma \ref{lem:argument-even}), and so $\sin\lrp{a_N^+(\theta)-m \theta} = \pm 1$ (alternatingly) $m-2$ times over the same interval. Hence $\im\lrb{e^{-im\theta} q_f^+\lrp{\frac{e^{i\theta}}{\sqrt N}}}$ has $m-3$ sign changes over $\theta \in (0,\pi)$, yielding $m-3$ distinct zeros in $(0,\pi)$, as desired.

    \textbf{Case 4: $2 \le N \le 15,\ \epsilon(f)=-1$.\\}
    In this case, we need to show that $\re\lrb{e^{-im\theta} q_f^+\lrp{\frac{e^{i\theta}}{\sqrt N}}}$ has at least $m-2$ distinct zeros over $\theta \in (0,\pi)$.

    Here, $a_N^+(\theta)-m \theta$ changes continuously from $\pi$ to $(-m+3)\pi$ over $\theta \in [0,\pi]$ (by Lemma \ref{lem:argument-even}), and so $\cos\lrp{a_N^+(\theta)-m \theta} = \pm 1$ (alternatingly) $m-1$ times over the same interval. Hence $\re\lrb{e^{-im\theta} q_f^+\lrp{\frac{e^{i\theta}}{\sqrt N}}}$ has $m-2$ sign changes over $\theta \in (0,\pi)$, yielding $m-2$ distinct zeros in $(0,\pi)$, as desired.

    \textbf{Case 5: $N=1$.\\}
    Note that $\epsilon(f)$ is necessarily equal to $+1$.
    In this case, we need to show that $\im\lrb{e^{-im\theta} q_f^+\lrp{\frac{e^{i\theta}}{\sqrt N}}}$ has at least $m-5$ distinct zeros over $\theta \in (0,\pi)$.

    Here, $a_N^+(\theta)-m \theta$ changes continuously from $0$ to $(-m+4)\pi$ over $\theta \in [0,\pi]$ (by Lemma \ref{lem:argument-even}), and so $\sin\lrp{a_N^+(\theta)-m \theta} = \pm 1$ (alternatingly) $m-4$ times over the same interval. Hence $\im\lrb{e^{-im\theta} q_f^+\lrp{\frac{e^{i\theta}}{\sqrt N}}}$ has $m-5$ sign changes over $\theta \in (0,\pi)$, yielding $m-5$ distinct zeros in $(0,\pi)$, as desired.

    \textbf{Case 6: $N=16,\ \epsilon(f)=+1$.\\}
    In this case, we need to show that $\im\lrb{e^{-im\theta} q_f^+\lrp{\frac{e^{i\theta}}{\sqrt 16}}}$ has at least $m-3$ distinct zeros over $\theta \in (0,\pi)$.

    Here, $a_{16}^+(\theta)-m\theta$ changes continuously from $-\frac{\pi}{2}$ to $\frac{\pi}{2}-m\pi$ over $\theta \in [0,\pi]$ (by Lemma \ref{lem:argument-even}), and so $\sin\lrp{a_{16}^+(\theta)-m \theta} = \pm 1$ (alternatingly) $m$ times over the same interval. 
    
    However, unlike all the previous cases, Proposition \ref{prop:error<radius-even} here only yields the bound $\lrabs{E_f^+(\theta)} < \frac{1}{2m}$. This bound turns out to still be sufficient to prove what we need.
    Let $\theta_1 \in (0,\pi)$ be such that $a_{16}^+(\theta_1)-m\theta_1 = -\frac{3\pi}{4}$. (Note that this also means that $a_{16}^+(\pi-\theta_1)-m(\pi-\theta_1) = \frac{3\pi}{4}-m\pi$ by the symmetry $a_{16}^+(\pi-\theta)=-a_{16}^+(\theta)$.) Then the bounds $-\frac{\pi}{2} \le a_{16}^+(\theta_1) \le \frac{\pi}{2}$ (from Lemma \ref{lem:argument-even}) imply that 
    \begin{align} \label{eqn:temp.4.1}
        \frac{\pi}{4m} \le \theta_1 &= \frac{1}{m} \lrp{\frac{3\pi}{4} + a_{16}^+(\theta_1)} \le \frac{1}{m} \frac{5\pi}{4} < \frac{\pi}{2}.
    \end{align}
    Hence for all $\theta \in [\theta_1,\pi-\theta_1]$, we obtain the error bound
    \begin{align}
        \frac{\sqrt 2}{2} \cdot r_{16}^+(\theta)
        &\ge \frac{\sqrt 2}{2}  \cdot r_{16}^+(\theta_1)  \qquad\qquad \text{by Lemma \ref{lemma:radius-increasing}}
        \\
        &= 
        \frac{\sqrt 2}{2} \sqrt{\frac12 \cosh \lrp{\pi \sin \theta_1} + \frac12 \cos\lrp{\pi \cos \theta_1}} \\
        &\ge 
        \frac{\sqrt 2}{2} \sqrt{\frac12 \lrp{1 + \frac{(\pi \sin \theta_1)^2}{2}} - \frac12 } \\
        &=
        \frac{\sqrt 2}{2} \frac{\pi \sin \theta_1}{2} \\
        &\ge \frac{\sqrt 2}{2} \theta_1 \qquad\qquad \text{by \eqref{eqn:temp.4.1}} \\
        &\ge  \frac{\sqrt 2}{2} \frac{\pi}{4m} \qquad\qquad \text{by \eqref{eqn:temp.4.1}} \\
        &> \lrabs{E_f^+(\theta)} \qquad\qquad \text{by Proposition \ref{prop:error<radius-even}}. \label{eqn:temp.4.3}
    \end{align}
    This means that $\im\lrb{e^{-im\theta} q_f^+\lrp{\frac{e^{i\theta}}{\sqrt{16}}}}$ will have the same sign as $\sin\lrp{a_{16}^+(\theta)-m\theta}$ whenever $\lrabs{\sin\lrp{a_{16}^+(\theta)-m\theta}} \ge \frac{\sqrt 2}{2}$.

    Finally, we conclude that $a_{16}^+(\theta)-m\theta$ changes continuously from $-\frac{3\pi}{4}$ to $\frac{3\pi}{4}-m\pi$ over $\theta \in [\theta_1,\pi-\theta_1]$, and so $\lrabs{\sin\lrp{a_{16}^+(\theta)-m \theta}} \ge \frac{\sqrt{2}}{2}$ (alternating in sign) $m$ times over the same interval. 
    Hence $\im\lrb{e^{-im\theta} q_f^+\lrp{\frac{e^{i\theta}}{\sqrt{16}}}}$ has $m-1$ sign changes over $\theta \in (\theta_1,\pi-\theta_1)$, yielding $m-1$ distinct zeros in $(0,\pi)$. Note this is actually more zeros than we needed. In fact, we have shown the desired result in this case for $\beta_{16}^+=0$.   

    \textbf{Case 7: $N=16,\ \epsilon(f)=-1$.\\}
    In this case, we need to show that $\re\lrb{e^{-im\theta} q_f^+\lrp{\frac{e^{i\theta}}{\sqrt{16}}}}$ has at least $m-2$ distinct zeros over $\theta \in (0,\pi)$. 

    Here, $a_{16}^+(\theta)-m \theta$ changes continuously from $-\frac{\pi}{2}$ to $\frac{\pi}{2}-m\pi$ over $\theta \in [0,\pi]$ (by Lemma \ref{lem:argument-even}), and so $\cos\lrp{a_{16}^+(\theta)-m \theta} = \pm 1$ (alternatingly) $m-1$ times over the same interval.  
    
    However, Proposition \ref{prop:error<radius-even} here only yields the bound $\lrabs{E_f^+(\theta)} < \frac{1}{2m}$.
    As in the previous case, let $\theta_1$ be such that $a_{16}^+(\theta_1)-m\theta_1 = -\frac{3\pi}{4}$. Then for all $\theta \in [\theta_1,\pi-\theta_1]$, we obtain from \eqref{eqn:temp.4.3} that
    \begin{align}
        r_{16}^+(\theta)
        &> \lrabs{E_f^+(\theta)}.
    \end{align}
    This means that $\re\lrb{e^{-im\theta} q_f^+\lrp{\frac{e^{i\theta}}{\sqrt{16}}}}$ will have the same sign as $\cos\lrp{a_{16}^+(\theta)-m\theta}$ whenever $\cos\lrp{a_{16}^+(\theta)-m\theta} = \pm 1$.

    Finally, we conclude that $a_{16}^+(\theta)-m\theta$ changes continuously from $-\frac{3\pi}{4}$ to $\frac{3\pi}{4}-m\pi$ over $\theta \in [\theta_1,\pi-\theta_1]$, and so $\cos\lrp{a_{16}^+(\theta)-m \theta} = \pm1$ (alternatingly) $m-1$ times over the same interval. 
    Hence $\re\lrb{e^{-im\theta} q_f^+\lrp{\frac{e^{i\theta}}{\sqrt{16}}}}$ has $m-2$ sign changes over $\theta \in (\theta_1,\pi-\theta_1)$, yielding $m-2$ distinct zeros in $(0,\pi)$, as desired.
\end{proof}

\section{Proof of Theorem \ref{thm:main-thm-odd}}
\label{sec:proof-main-thm-odd}

In this section, we prove Theorem \ref{thm:main-thm-odd}. The details are very similar to those of Theorem \ref{thm:main-thm-even}, but we give a full proof here for completeness. Recall (by the reformulation of Section \ref{sec:reformulation}) that it suffices to show that the imaginary/real part of $e^{-im\theta}q_f^-\lrp{ \frac{e^{i \theta}}{\sqrt N}}$ has at least $m-1-\delta_f^- - 2\beta_N^-$ distinct zeros for $\theta \in (0,\pi)$. 
\begin{proof} 
    Recall that the level $N=1$ case was already shown in \cite{Conrey2013}. So in this proof, we only consider $N \ge 2$.
    Additionally, observe that there are only finitely many pairs $(N,k)$ with $2 \log_2 N \le k < k_N^-$ (see Proposition \ref{prop:error<radius-odd}). Hence to prove the desired result for $k < k_N^+$, we just checked these finitely many cases by computer \cite{ross-code}. For rest of the proof, we will then assume that $k \ge k_N^-$, so that we can apply Proposition \ref{prop:error<radius-odd}.
    
    \textbf{Case 1: $N \ge 5,\ \epsilon(f)=-1$.\\}
    In this case, we need to show that $\im\lrb{e^{-im\theta} q_f^-\lrp{\frac{e^{i\theta}}{\sqrt N}}}$ has at least $m-2$ distinct zeros over $\theta \in (0,\pi)$. Writing $q_f^-\lrp{\frac{e^{i\theta}}{\sqrt N}}$ in terms of its approximation function, we obtain
    \begin{align}
        \im\lrb{e^{-im\theta} q_f^-\lrp{\frac{e^{i\theta}}{\sqrt N}}}
        &= \im\lrb{e^{-im\theta}  \sin\lrp{\frac{2\pi e^{i\theta}}{\sqrt N}} 
        + e^{-im\theta} E_f^-(\theta)
        } \\
        &= \sin \lrp{a_N^-(\theta)-m \theta} r_N^-(\theta) + \im\lrb{e^{-im\theta} E_f^-(\theta)}. \label{eqn:temp.12.1}
    \end{align}
    Then recall that $\lrabs{E_f^-(\theta)} < r_N^-(\theta)$ (by Proposition \ref{prop:error<radius-odd}). Hence \eqref{eqn:temp.12.1} means that 
    $\im\lrb{e^{-im\theta} q_f^-\lrp{\frac{e^{i\theta}}{\sqrt N}}}$ will have the same sign as $\sin\lrp{a_N^-(\theta)-m \theta}$
    whenever $\sin\lrp{a_N^-(\theta)-m \theta} = \pm 1$. 
    
    Now, $a_N^-(\theta)-m \theta$ changes continuously from $0$ to $\pi-m\pi$ over $\theta \in [0,\pi]$ (by Lemma \ref{lem:argument-odd}), and so $\sin\lrp{a_N^+(\theta)-m \theta} = \pm 1$ (alternatingly) $m-1$ times over the same interval. Hence $\im\lrb{e^{-im\theta} q_f^-\lrp{\frac{e^{i\theta}}{\sqrt N}}}$ has $m-2$ sign changes over $\theta \in (0,\pi)$, yielding $m-2$ distinct zeros in $(0,\pi)$, as desired.

    \textbf{Case 2: $N \ge 5,\ \epsilon(f)=+1$.\\}
    In this case, we need to show that $\re\lrb{e^{-im\theta} q_f^-\lrp{\frac{e^{i\theta}}{\sqrt N}}}$ has at least $m-1$ distinct zeros over $\theta \in (0,\pi)$. Just like before, we have
    \begin{align}
        \re\lrb{e^{-im\theta} q_f^-\lrp{\frac{e^{i\theta}}{\sqrt N}}}
        &= \re\lrb{e^{-im\theta}  \cos\lrp{\frac{2\pi e^{i\theta}}{\sqrt N}} 
        + e^{-im\theta} E_f^-(\theta)
        } \\
        &= \cos \lrp{a_N^-(\theta)-m \theta} r_N^-(\theta) + \re\lrb{e^{-im\theta} E_f^-(\theta)}. \label{eqn:temp.12.2}
    \end{align}
    Hence 
    $\re\lrb{e^{-im\theta} q_f^-\lrp{\frac{e^{i\theta}}{\sqrt N}}}$ will have the same sign as $\cos\lrp{a_N^-(\theta)-m \theta}$
    whenever $\cos\lrp{a_N^-(\theta)-m \theta} = \pm 1$. 

    Now, $a_N^-(\theta)-m \theta$ changes continuously from $0$ to $\pi-m\pi$ over $\theta \in [0,\pi]$ (by Lemma \ref{lem:argument-odd}), and so $\cos\lrp{a_N^-(\theta)-m \theta} = \pm 1$ (alternatingly) $m$ times over the same interval. Hence $\re\lrb{e^{-im\theta} q_f^-\lrp{\frac{e^{i\theta}}{\sqrt N}}}$ has $m-1$ sign changes over $\theta \in (0,\pi)$, yielding $m-1$ distinct zeros in $(0,\pi)$, as desired.
        
    \textbf{Case 3: $2 \le N \le 3,\ \epsilon(f)=-1$.\\}
    In this case, we need to show that $\im\lrb{e^{-im\theta} q_f^-\lrp{\frac{e^{i\theta}}{\sqrt N}}}$ has at least $m-4$ distinct zeros over $\theta \in (0,\pi)$.

    Here, $a_N^-(\theta)-m \theta$ changes continuously from $\pi$ to $(-m+4)\pi$ over $\theta \in [0,\pi]$ (by Lemma \ref{lem:argument-odd}), and so $\sin\lrp{a_N^-(\theta)-m \theta} = \pm 1$ (alternatingly) $m-3$ times over the same interval. Hence $\im\lrb{e^{-im\theta} q_f^-\lrp{\frac{e^{i\theta}}{\sqrt N}}}$ has $m-4$ sign changes over $\theta \in (0,\pi)$, yielding $m-4$ distinct zeros in $(0,\pi)$, as desired.

    \textbf{Case 4: $2 \le N \le 3,\ \epsilon(f)=+1$.\\}
    In this case, we need to show that $\re\lrb{e^{-im\theta} q_f^-\lrp{\frac{e^{i\theta}}{\sqrt N}}}$ has at least $m-3$ distinct zeros over $\theta \in (0,\pi)$.

    Here, $a_N^-(\theta)-m \theta$ changes continuously from $\pi$ to $(-m+4)\pi$ over $\theta \in [0,\pi]$ (by Lemma \ref{lem:argument-odd}), and so $\cos\lrp{a_N^-(\theta)-m \theta} = \pm 1$ (alternatingly) $m-2$ times over the same interval. Hence $\re\lrb{e^{-im\theta} q_f^-\lrp{\frac{e^{i\theta}}{\sqrt N}}}$ has $m-3$ sign changes over $\theta \in (0,\pi)$, yielding $m-3$ distinct zeros in $(0,\pi)$, as desired.

    \textbf{Case 5: $N=4$.\\}
    Note that here, $\epsilon(f)$ is necessarily equal to $-1$, by \cite{Zhang2014}.
    In this case, we need to show that $\im\lrb{e^{-im\theta} q_f^-\lrp{\frac{e^{i\theta}}{\sqrt 4}}}$ has at least $m-2$ distinct zeros over $\theta \in (0,\pi)$.

    Here, $a_{4}^-(\theta)-m\theta$ changes continuously from $-\frac{\pi}{2}$ to $\frac{3\pi}{2}-m\pi$ over $\theta \in [0,\pi]$ (by Lemma \ref{lem:argument-odd}), and so $\sin\lrp{a_{4}^-(\theta)-m \theta} = \pm 1$ (alternatingly) $m-1$ times over the same interval. 
    
    However, Proposition \ref{prop:error<radius-odd} here only yields the bound $\lrabs{E_f^-(\theta)} < \frac{1}{2m}$. 
    Let $\theta_1$ be such that $a_{4}^-(\theta_1)-m\theta_1 = -\frac{3\pi}{4}$. Then the bounds $-\frac{\pi}{2} \le a_{4}^-(\theta_1) \le \frac{3\pi}{2}$ (from Lemma \ref{lem:argument-odd}) imply that 
    \begin{align} \label{eqn:temp.14.1}
        \frac{\pi}{4m} \le \theta_1 &= \frac{1}{m} \lrp{\frac{3\pi}{4} + a_{4}^-(\theta_1)} \le \frac{1}{m} \frac{9\pi}{4} < \frac{\pi}{2}.
    \end{align}
    Hence for all $\theta \in [\theta_1,\pi-\theta_1]$, we obtain the error bound
    \begin{align}
        \frac{\sqrt 2}{2} \cdot r_{4}^-(\theta)
        &\ge \frac{\sqrt 2}{2}  \cdot r_{4}^-(\theta_1)  \qquad\qquad \text{by Lemma \ref{lemma:radius-increasing}}
        \\
        &= 
        \frac{\sqrt 2}{2} \sqrt{\frac12 \cosh \lrp{2\pi \sin \theta_1} - \frac12 \cos\lrp{2\pi \cos \theta_1}} \\
        &\ge 
        \frac{\sqrt 2}{2} \sqrt{\frac12 \lrp{1 + \frac{(2\pi \sin \theta_1)^2}{2}} - \frac12 } \\
        &=
        \frac{\sqrt 2}{2} \frac{2\pi \sin \theta_1}{2} \\
        &\ge \frac{\sqrt 2}{2} 2\theta_1 \qquad\qquad \text{by \eqref{eqn:temp.14.1}} \\
        &\ge  \frac{\sqrt 2}{2} \frac{\pi}{2m} \qquad\qquad \text{by \eqref{eqn:temp.14.1}} \\
        &> \lrabs{E_f^-(\theta)} \qquad\qquad \text{by Proposition \ref{prop:error<radius-odd}}. \label{eqn:temp.14.3}
    \end{align}
    This means that $\im\lrb{e^{-im\theta} q_f^-\lrp{\frac{e^{i\theta}}{\sqrt{4}}}}$ will have the same sign as $\sin\lrp{a_{4}^-(\theta)-m\theta}$ whenever
    \\ 
    $\lrabs{\sin\lrp{a_{4}^-(\theta)-m\theta}} \ge \frac{\sqrt 2}{2}$.

    Finally, we conclude that $a_{4}^-(\theta)-m\theta$ changes continuously from $-\frac{3\pi}{4}$ to $\frac{7\pi}{4}-m\pi$ over $\theta \in [\theta_1,\pi-\theta_1]$, and so $\lrabs{\sin\lrp{a_{4}^-(\theta)-m \theta}} \ge \frac{\sqrt{2}}{2}$ (alternating in sign) $m-1$ times over the same interval. 
    Hence $\im\lrb{e^{-im\theta} q_f^-\lrp{\frac{e^{i\theta}}{\sqrt{4}}}}$ has $m-2$ sign changes over $\theta \in (\theta_1,\pi-\theta_1)$, yielding $m-2$ distinct zeros in $(0,\pi)$, as desired.        
\end{proof}

\section*{Acknowledgements}
This research was supported by the National Security Agency MSP grant H98230-24-1-0033.


\bibliographystyle{plain}
\bibliography{reference.bib}

\end{document}